     \def\section{\@startsection{section}{1}%
     \z@{.7\linespacing\@plus\linespacing}{.5\linespacing}%
     {\bfseries
     \centering
     }}
     \def\@secnumfont{\bfseries}
\newtheorem{theorem}{Theorem}[section]
\newtheorem{lemma}[theorem]{Lemma}
\newtheorem{proposition}[theorem]{Proposition}
\theoremstyle{definition}
\newtheorem{definition}[theorem]{Definition}
\newtheorem{remark}[theorem]{Remark}
\numberwithin{equation}{section}
\begin{document}

\title[LDP for a Class of Semilinear SPDEs in Any Space Dimension]{Large Deviations for a Class of Parabolic Semilinear Stochastic Partial Differential Equations in Any Space Dimension}


\author{Leila Setayeshgar}

\address{Leila Setayeshgar: Department of Mathematics $\&$ Statistics \\ Utah State University\\  3900 Old Main Hill, ANSC 202\\ 	Logan, UT 84322}
\email{{\tt leila.setayeshgar@usu.edu}} 
\address{Department of Mathematics and Computer Science, Providence College, Providence, RI 02918}
\email{\tt lsetayes@providence.edu}


\subjclass[2000] {Primary 60H15, 60H10; Secondary 37L55}

\keywords{large deviations, stochastic partial differential equations, infinite dimensional dynamical systems.}

\begin{abstract}
We prove the large deviation principle for the law of the solutions to a class of parabolic semilinear stochastic partial differential equations driven by multiplicative noise, in $C\big([0,T]:L^\rho(D)\big)$, where $D\subset {\mathbb R}^d$ with $d\geqslant 1$ is a bounded convex domain with smooth boundary and $\rho$ is any real, positive and large enough number.  The equation has nonlinearities of polynomial growth of any order, the space variable is of any dimension, and the proof is based on the weak convergence method.
\end{abstract}

\maketitle

\section{Introduction }

\noindent Let $D\subset {\mathbb R}^d$, with $d\geqslant 1$ be a bounded convex domain with smooth boundary $\partial D$.  We consider a family of nonlinear parabolic semilinear stochastic partial differential equations indexed by $0<\varepsilon\leqslant1$

\begin{eqnarray}\label{E:1}
\begin{array}{l}
{\displaystyle\frac{\partial}{\partial t} u^{\varepsilon}(t,x)=\bigg[\frac{\partial}{\partial x_i} \bigg(b_{ij}(x)\frac{\partial}{\partial x_j}u^{\varepsilon}(t,x) + g_i\big(t,x, u^{\varepsilon}(t,x)\big)\bigg)+f\big(t,x,u^{\varepsilon}(t,x)\big)\bigg]}\vspace{1 pt}\\
 \hspace{50pt} {\displaystyle+\sqrt{\varepsilon}{\sigma}_j\big(t, x, u^{\varepsilon}(t,x)\big)\frac{d}{dt}B^j, ~~~~~~~~~t\geqslant0, ~~x\in D,} 
 \vspace{7pt}\\ 
 \end{array}
 \end{eqnarray}
 
 \noindent with Dirichlet boundary conditions \\
 
 ${\displaystyle u^{\varepsilon}(t,x) = 0, ~~~t\geqslant 0, ~~x\in \partial D,}$\\

\noindent and initial condition\\

 ${\displaystyle u^{\varepsilon}(0,x) = \xi(x), ~~~x\in D.}$\\


\noindent Here $\displaystyle B^j := \{B^j(t), t\geqslant 0 , j=1,2, \cdots k\}$ is a $k$-dimensional Wiener process. The initial condition $\xi$, has a continuous stochastic modification and belongs to $L^{\rho}(D)$ for any $\rho$ real, positive and large enough.   The functions $f:=f(t, x, r)$, ${\sigma}_i := {\sigma}_i(t,x,r), ~i=1, 2, \cdots, k$ are locally Lipschitz in the third variable and have linear growth in $r\in {\mathbb R}$.  The function $g_i := g_i(t,x,r), ~i=1, 2, \cdots,d$ is locally Lipschitz in the third variable, and has polynomial growth of any order $\nu\geqslant 1$ in $r$.  Therefore, our family of semilinear equations contains, as special cases, both the stochastic Burgers' equation, and the stochastic reaction diffusion equation. The existence and uniqueness to equation (\ref{E:1}) has been proven by Gy\"ongy  and Rovira (2000) \cite{GR} via an approximation procedure. Our aim is to prove the large deviation principle for the law of the solutions to equation (\ref{E:1}) by employing the weak convergence method.  In the context of stochastic differential equations (SDEs), the Freidlin-Wentzell theory \cite{FW}, describes the {{asymptotic behavior}} of probabilities of the large deviations of the law of the solutions to a family of small noise finite dimensional SDEs, away from its law of large number limit. In this work we deal with the case where the noise term is infinite dimensional.  In \cite{BDM},  Budhiraja et al. (2008) use certain {{variational representations}} for infinite dimensional Brownian motions \cite{BD} based on the work of Bou\'e and Dupuis \cite{BoD} and show that, these representations provide a framework for proving large deviations for a variety of infinite dimensional systems, such as stochastic partial differential equations (SPDEs).  One of the advantages of their method is that, the technical exponential probability estimates needed to justify certain approximations are bypassed.  Instead, one is required to prove certain qualitative properties of the SPDE under study.  The following is the main contribution of this paper and establishes the large deviation principle for the law of the solutions to equation (\ref{E:1}).

\begin{theorem}[Main Theorem]\label{LDP}

\noindent The processes $\{u^{\varepsilon}(x, t): x\in D, ~~t\in[0,T]\}$ satisfy the large deviation principle on $C\big([0,T]: L^{\rho}(D)\big)$ with rate function $I_{\xi}$ given by (\ref{ratefunction}).
\end{theorem}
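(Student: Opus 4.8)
The plan is to apply the weak convergence (variational) method of Budhiraja, Dupuis and Maroulas \cite{BDM}. By the existence and uniqueness statement recalled in Theorem \ref{existence}, for each $\varepsilon>0$ there is a Borel measurable map $\mathcal{G}^{\varepsilon}\colon C([0,T];{\mathbb R}^k)\to C([0,T);L^{\rho}(D))$ with $u^{\varepsilon}=\mathcal{G}^{\varepsilon}(\sqrt{\varepsilon}\,B)$ almost surely. For $h\in L^2([0,T];{\mathbb R}^k)$ let $u^{h}=\mathcal{G}^{0}\big(\int_0^{\cdot}h(s)\,ds\big)$ denote the unique solution of the deterministic skeleton equation obtained from (\ref{E:1}) by dropping the stochastic term and adding the drift ${\sigma}_j\big(t,x,u^{h}(t,x)\big)h^{j}(t)$; the rate function $I_{\xi}$ in (\ref{ratefunction}) is the infimum of $\tfrac12\int_0^T|h(s)|^2\,ds$ over all $h$ for which $u^{h}$ equals the given path. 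According to \cite{BDM}, the LDP on $C([0,T);L^{\rho}(D))$ with this (good) rate function follows once two sufficient conditions are checked: \textbf{(i)} for every $N<\infty$ the set $\{\,u^{h}:h\in S_N\,\}$ is compact in $C([0,T);L^{\rho}(D))$, where $S_N=\{\,h\in L^2([0,T];{\mathbb R}^k):\int_0^T|h(s)|^2\,ds\le N\,\}$ carries the weak $L^2$ topology; and \textbf{(ii)} whenever $\varepsilon_n\to0$ and $h^{\varepsilon_n}$ are predictable $S_N$-valued controls with $h^{\varepsilon_n}\Rightarrow h$ in distribution on $S_N$, the solutions $\widetilde u^{\,\varepsilon_n}$ of the controlled stochastic equation --- equation (\ref{E:1}) with the additional drift ${\sigma}_j(t,x,u)h^{\varepsilon_n,j}(t)$ --- converge in distribution to $u^{h}$ in $C([0,T);L^{\rho}(D))$.

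The engine behind both conditions is a family of a priori energy estimates, uniform over controls in $S_N$, obtained by applying It\^o's formula (resp.\ the chain rule in the deterministic case) to $\|u\|_{L^{\rho}(D)}^{\rho}$, i.e.\ by testing the equation against $u\,|u|^{\rho-2}$. The symmetric elliptic form associated with $b_{ij}$ provides the coercive term $\int_D |\nabla(|u|^{\rho/2})|^2\,dx$; the divergence structure of $g_i$ allows an integration by parts after which, despite the polynomial growth of order $\nu$, the top-order contribution of $g_i$ is absorbed into this coercive term plus lower-order quantities, exactly as in the Gy\"ongy--Rovira scheme \cite{GR}; the linear growth of $f$ and ${\sigma}_j$ and a Gr\"onwall argument control the remainder, with the stochastic term handled by the Burkholder--Davis--Gundy inequality and the control term by Cauchy--Schwarz together with $h^{\varepsilon}\in S_N$. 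This gives bounds for $u$ in $L^{\infty}\big([0,T];L^{\rho}(D)\big)$ with $|u|^{\rho/2}\in L^2\big([0,T];H_0^1(D)\big)$, uniformly in the control, and (via a bound on $\partial_t u$ in a space of negative order) compactness in $C([0,T);L^{\rho}(D))$ by an Aubin--Lions type argument combined with equicontinuity in time; near $t=0$ the latter uses the smoothing of the analytic semigroup generated by the principal part, since $\xi$ is only assumed to lie in $L^{\rho}(D)$.

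For \textbf{(i)}, take $h_n\rightharpoonup h$ in $S_N$, extract from the above a subsequence along which $u^{h_n}$ converges in $C([0,T);L^{\rho}(D))$ and (by Aubin--Lions) strongly in $L^2([0,T]\times D)$; the local Lipschitz property of $g_i$, $f$, ${\sigma}_j$ together with the weak convergence of $h_n$ lets one pass to the limit in the weak formulation and identify the limit as $u^{h}$, and uniqueness of the skeleton solution promotes subsequential convergence to convergence of the whole net, hence compactness of $\{\,u^{h}:h\in S_N\,\}$. For \textbf{(ii)}, the same estimates give tightness of the laws of $\widetilde u^{\,\varepsilon_n}$ on $C([0,T);L^{\rho}(D))$; using a Skorokhod representation and a martingale characterization one identifies the weak limit, observing that the stochastic integral term carries the vanishing factor $\sqrt{\varepsilon_n}$ while the controlled drift ${\sigma}_j(t,x,\widetilde u^{\,\varepsilon_n})h^{\varepsilon_n,j}$ passes to the limit by combining the strong convergence of $\widetilde u^{\,\varepsilon_n}$ with the weak convergence $h^{\varepsilon_n}\Rightarrow h$. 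The limit satisfies the skeleton equation driven by $h$, so it equals $u^{h}$, giving \textbf{(ii)}.

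I expect the main obstacle to be the passage to the limit in the nonlinear terms, and in particular controlling the polynomial term $g_i$ of arbitrary order $\nu$ in arbitrary dimension $d$, where the natural estimates are borderline. The resolution is to work throughout in the $L^{\rho}(D)$ scale with $\rho$ chosen as in Theorem \ref{existence} --- large enough that $u\,|u|^{\rho-2}$ is an admissible test function and all nonlinear contributions are integrable --- and to use the sign of the leading divergence-form operator so that the dangerous top-order part of $g_i$ is absorbed rather than estimated crudely, following \cite{GR}. A secondary technical point, the behaviour at $t=0$ where only $\xi\in L^{\rho}(D)$ is available, is dealt with by parabolic regularization and is routine but must be stated with care.
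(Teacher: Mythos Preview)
Your overall strategy---applying the Budhiraja--Dupuis--Maroulas variational scheme and reducing the LDP to the two sufficient conditions (compactness of the skeleton map on $S_N$, and convergence in law of the controlled solutions)---is exactly the framework the paper uses. Where you diverge is in the technical engine powering both conditions.

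The paper works entirely in the \emph{mild} formulation: it writes the controlled solution as a sum of five integrals against the Dirichlet heat kernel $G_{t-s}(x,y)$, and proves tightness of each piece separately in $C([0,T];L^\rho(D))$ by invoking the off-the-shelf kernel regularity lemmas of Gy\"ongy--Rovira (specifically \cite[Corollaries 3.2 and 3.6]{GR}, stated in the paper as Lemmas 2.4 and 2.5). The only a priori input needed is boundedness in probability of $\sup_{t\le T}|v^{\varepsilon}|_\rho$, which is lifted from \cite[Proposition 4.4]{GR} via the truncation argument. Identification of the limit is then done term-by-term on the mild formulation after Skorokhod representation, using the local Lipschitz hypothesis (A5) and the kernel estimate (E4) directly. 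Both assumptions (S1) and (S2) follow from a single convergence theorem by specializing the auxiliary function $\gamma$ to $0$ and to the identity, respectively.

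You instead propose an energy/variational route: test against $u|u|^{\rho-2}$, obtain uniform bounds in $L^\infty_t L^\rho_x$ together with $|u|^{\rho/2}\in L^2_t H^1_0$, and extract compactness in $C([0,T];L^\rho)$ via an Aubin--Lions type argument plus time equicontinuity. This is a legitimate alternative and closer in spirit to how \cite{GR} obtain their energy inequality, but it is heavier than what the paper does: you have to justify a stochastic Aubin--Lions statement (bounding $\partial_t u$ in a negative space is delicate when a martingale term is present), and Aubin--Lions by itself yields $L^p_t$ rather than $C_t$ compactness, so the time-equicontinuity step carries real weight. The paper sidesteps all of this because the heat-kernel lemmas already deliver tightness in $C^\alpha([0,T];L^\rho(D))$ directly from the boundedness-in-probability input. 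Your approach would work, but the paper's is shorter precisely because it recycles the mild-solution machinery already built in \cite{GR}.
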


\noindent For the conception of the proof we refer the reader to \cite{BD}.  Note that for carrying out the proof Theorem 6 in Budhiraja, Dupuis and Maroulas \cite{BDM} is utilized.  We defer the precise definition of the rate function to Section 3.  The main difference between the current work and that of \cite{GR} is that here the driving noise is finite dimensional as opposed to infinite dimensional and the space variable is of several dimensions as opposed to one dimensional.

\subsection{Outline of the paper.}
 In Section 2 we state some assumptions and preliminaries.  The existence and uniqueness results for the family of semilinear SPDEs is also stated in this section.  In Section 3 we state the large deviations theorem due to Budhiraja, Dupuis and Maroulas (\cite[Theorem 7]{BDM}) which we exploit.  In Section 4 we introduce the controlled and the skeleton equations and establish their existence and uniqueness.  Section 5 is devoted to the proof of the main theorem.  Establishing the large deviations principle hinges on proving the tightness and convergence of the controlled process. This is carried out in Proposition 5.1.
 Finally, proof of the main theorem follows readily by verification of assumptions (S1) and (S2).
\subsection{Notation.}
{ Unless otherwise noted, we adopt the following notation throughout the paper.  The summation convention is in place.  The notation $:=$ means by definition.    $C$ denotes a  {{free}} constant which may take on different values, and depend upon other parameters. If $\theta$ is a vector, then $\theta_i$ denotes the $i^{th}$ component of that vector. We use the notation $|h(t,\cdot)|_p =|h(t)|_p$ to denote the $L^p({\mathbb R}^d)$-norm of a function $h = h(t,x)$ with respect to the variable $x\in {\mathbb R}^d$.  If $h(t,x)$ is only defined for $x\in D$, then $|h(t)|_p$ denotes the $L^p(D)$ norm.  If $h=h(t,x)$ is a random field and $X$ assumes a value in a functional space, then saying that almost surely $h$ is in $X$ means that $h$ has a stochastic modification which is in $X$, almost surely.}

\noindent\section{Preliminaries}

\noindent In this section we introduce a set of assumptions and preliminaries that are necessary for the formulation of the problem.  Let  $(\Omega, {\mathcal F}, {\mathbb F}, P)$ be a filtered probability space or stochastic basis carrying a $k$-dimensional Brownian motion $\{B^j(t), t\geqslant 0 , 1\leqslant j\leqslant k\}$, with the filteration ${\mathbb F}=\{{\mathcal F}_{t}\}_{t\in[0,T]}$.  The following are some main assumptions that are in effect throughout the paper: \\

\begin{enumerate}[(A1)]

\item [(A1)] The domain $D\subset {\mathbb R}^d, ~d\geqslant 1$ is a bounded convex set with smooth boundary.\\

\item  [(A2)] The matrix $b_{ij}(x) \in C^2(\bar D)$ is symmetric for every $x\in D$, and satisfies the uniform ellipticity condition, i.e.,

$$\frac{1}{\kappa}|\mu|^2\geqslant b_{ij}(x){\mu}_i{\mu}_j\geqslant \kappa|{\mu}|^2, ~~~~~\forall ~~\mu\in {\mathbb R^d}, ~~~~x\in D,$$

\noindent with a constant $\kappa>0$. \\

\item [(A3)] The functions $g_i$ are of the form $g_i(t,x,r) := g_{i1}(x,t,r)+g_{i2}(t,r)$, where $g_{i1}$ and $g_{i2}$ are Borel functions of $(t,x,r)\in {\mathbb R}_+\times D \times {\mathbb R}$ and of $(t,r)\in {\mathbb R}_+\times {\mathbb R}$, respectively.  Moreover, for every $T\geqslant 0$ there is a constant $K$ such that

$$|g_{i1}(t,x,r)|\leqslant K(1+|r|),~~~~~~~~~~~~|g_{i2}(t,r)|\leqslant K(1+|r|^{\nu}),$$

\noindent for all $t\in [0,T]$, $x\in D$, $r\in {\mathbb R}$, with some $\nu\geqslant 1$, and for all $~~i = 1, \cdots, d$.\\

\item [(A4)] The functions $f:=f(t,x,r)$, ${\sigma}_j :={\sigma}_j(t,x,r),~~~~~~~~ j=1, \cdots k$ are Borel functions and have linear growth in $r$, i.e., \\

\noindent For every $T\geqslant 0$ there exists a constant $L$ such that

\begin{align*}
&\sum_j|\sigma_j(t,x,r)|^2\leqslant L(|r|^2+1), ~~~~~j=1,\cdots,k,\\&|f(t,x,r)|\leqslant L(|r|+1), 
\end{align*}

\noindent for all $t\in [0,T]$, $x\in D$, and $r,s \in {\mathbb R}$.

\item [(A5)] For every $T\geqslant 0$, there exists a constant $L$ such that

\begin{align*}
& \sum_j|\sigma_j(t,x,r)-\sigma_j(t,x,s)|^2\leqslant L(|r-s|^2), ~~~j=1,\cdots,k,\\& |f(t,x,r)-f(t,x,s)|\leqslant L|r-s|,\\&  |g_i(t,x,r)-g_i(t,x,s)|\leqslant L(1+|r|^{\nu-1}+|s|^{\nu-1})|r-s|, ~~~i=1,\cdots,d,
\end{align*}

\noindent for all $t\in [0,T]$, $x\in D$, $r,s \in {\mathbb R}$.

\end{enumerate}
\begin{definition}[Mild solution]\label{definition} \rm{A random field $u^{\varepsilon} :=  \{u^{\varepsilon}(t,x) : t\in[0,T], x\in D\}$ is called a mild solution of equation (\ref{E:1}) with initial condition $\xi\in L^\rho(D)$ if $u^{\varepsilon}(\cdot,\cdot)$ is $C([0,T]: L^\rho(D))$-valued a.s. and $u^{\varepsilon}(t,x)$ is $\{{\mathcal F}_t\}$-measurable for any $t\in[0,T]$, and $x\in D$, and if

\begin{align*}
\label{E:Semilinear_Solution_definition}
u^{\varepsilon}(t,x)&= \int_D G_t(x,y)\xi(y)dy + \sqrt{\varepsilon} \int_0^t\int_D G_{t-s}(x,y) \sigma_j(s, u^{\varepsilon}(s))(y)dydB^j(s)\\& -\int_0^t\int_0^1\partial_{y_i}G_{t-s}(x,y)g_i(s, u^{\varepsilon}(s))(y)dyds\\&+\int_0^t\int_D G_{t-s}(x,y)f(s, u^{\varepsilon}(s))(y)dyds.
\end{align*}}
\end{definition}

\noindent The function $G_t(x,y)$, $t\geqslant 0,~~ x,y \in D$ is the Green kernel associated with the following linear equation\\

$\displaystyle \frac{\partial}{\partial t} u(t,x)= \frac{\partial}{\partial x_i} \bigg(b_{ij}(x)\frac{\partial}{\partial x_j}u(t,x)\bigg),$\\ 

\noindent with Dirichlet's boundary condition\\

$\displaystyle u(t,x) =0, ~~t\geqslant 0, ~x\in \partial D,$\\ 

\noindent where $b_{ij}\in C^2(\bar D)$, and $\partial D$ is Lipschitz.   We now state some estimates on the Dirichlet heat kernel ({\cite[Proposition 3.5]{GR}}, \cite{L}).  

\subsection{Estimates on the Dirichlet heat kernel}
 
\noindent There exist Borel functions $a, b, d$ and some constants $K, C>0$ such that for some $p\geqslant 1$ and for all $0\leqslant s<t\leqslant T,$ $x, y\in D$
 
 \begin{enumerate}[(E1)]
 
 \item [(E1)]$\displaystyle |D^\delta_xG_{t-s}(x,y)|\leqslant a(t-s, x-y), ~~~~~~~~~~~~~|a(t, \cdot)|_p\leqslant K_pt^{-1+{\kappa}_p}$,\\
 
 \item [(E2)]$\displaystyle |\frac{\partial}{\partial x_i} D_x^\delta G_{t-s}(x,y)|\leqslant b(t-s, x-y), ~~~~~~~~~~~~~|b(t, \cdot)|_p\leqslant K_pt^{-1-{\theta}_p+{\kappa}_p}$,\\
 
 \item [(E3)]$\displaystyle |\frac{\partial}{\partial s}D_x^\delta G_{t-s}(x,y)|\leqslant c(t-s, x-y), ~~~~~~~~~~~~~|c(t, \cdot)|_p\leqslant K_pt^{-1-{v}_p+{\kappa}_p}, $
 
 \noindent where $\displaystyle {\kappa}_p := \frac{1}{2}(d/p-d+2-|\delta|), ~~\theta_p:=(|\delta|+1)/2, ~~~v_p:=(|\delta|+2)/2.$\\

\item [(E4)] $\displaystyle |D_t^nD_x^\delta G_x(t; x,y)|\leqslant K t^{-(d+2n+|\delta|)/2} \exp\bigg(-C\frac{|x-y|^2}{t-s}\bigg)$

\noindent for $2n+|\delta|\leqslant 3$, where $D_t^n := \partial_n/\partial t^n, ~D_x^\delta := \partial^\delta_1/\partial {x_1}^{\delta_1}\cdots \partial^\delta_d/\partial {x_d}^{\delta_d}, \\~\delta := (\delta_1, \cdots \delta_d)$ is a multi-index, $|\delta| := \delta_1+\delta_2+\cdots\delta_d$.  
\end{enumerate}

\begin{remark}
{\it Due to $A := (\partial/\partial x_i)(b_{ij}(x)\partial/\partial x_j) := A^*$, $G_{t-s}(x,y)$ is symmetric in $x,y$. Therefore (E4) also holds with $D_y$ in place of $D_x$.}
 \end{remark}

\noindent The following Theorem (\cite[Theorem 2.1]{GR}) asserts the existence of a unique solution to equation (\ref{E:1}).

\begin{theorem}[Existence $\&$ uniqueness of solution mapping]\label{existence}
Assume the set of Hypotheses (A).  Then there exists ${\rho}_0 :={\rho}_0(\nu,d)$, such that for every $\rho>{\rho}_0$ equation (\ref{E:1}) has a unique $C([0,T]: L^{\rho}(D))$-valued solution, provided $\xi$ is an ${\mathcal {F}}_0$-measurable, $L^{\rho}(D)$-valued random element.  Moreover, if $\xi$ has a continuous stochastic modification, then $u^\varepsilon(t,x)$ has a stochastic modification which is continuous in $(t,x) \in[0,\infty)\times D$.
\end{theorem}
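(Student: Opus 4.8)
The plan is to reproduce the proof of \cite[Theorem 2.1]{GR}, which proceeds by an approximation (truncation) and localization argument built on the mild formulation of Definition \ref{definition} and the Dirichlet heat-kernel estimates (E1)--(E4). There are three ingredients: solvability of a truncated equation whose coefficients are globally Lipschitz, via a fixed-point argument; a priori bounds in $L^{\rho}(D)$ uniform in the truncation, together with a localization that removes the cutoff and rules out explosion, yielding global existence and uniqueness; and a Kolmogorov-type space--time estimate that produces the continuous modification and, along the way, pins down the threshold $\rho_0 = \rho_0(\nu,d)$.

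First I would treat a truncated problem. For $R > 0$ fix a smooth cutoff $\chi_R : \mathbb{R}_+ \to [0,1]$ with $\chi_R \equiv 1$ on $[0,R]$ and $\chi_R \equiv 0$ on $[R+1,\infty)$, and in the mild equation replace $g_i(s,u(s))$ by $\chi_R(|u(s)|_{\rho})\, g_i(s,u(s))$. By the polynomial growth and local Lipschitz bounds (A3), (A5), the truncated drift is globally Lipschitz as a map $L^{\rho}(D) \to L^{\rho/\nu}(D)$, while $f$ and $\sigma_j$ have linear growth and are locally Lipschitz by (A4), (A5). I would then solve the truncated mild equation by Banach's fixed-point theorem on the space of $\{\mathcal{F}_t\}$-adapted processes with norm $\sup_{t \le T} e^{-\beta t}(\mathbb{E}|u(t)|_{\rho}^{\rho})^{1/\rho}$, making the contraction constant less than one by choosing $\beta$ large. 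The estimates on the deterministic convolutions come from (E1) and (E2) via Minkowski's and Young's inequalities --- the singular factor $t^{-1-\upsilon_p+\lambda_p}$ from the lost derivative in (E2) must be time-integrable, which is where the size of $\rho$ enters --- and the estimate on the stochastic convolution comes from the factorization method together with Burkholder's inequality and (E1). This produces a unique global solution $u_R$ of the truncated equation.

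Next I would derive $\sup_{t \le T} \mathbb{E}|u_R(t)|_{\rho}^{\rho} \le C(T,\xi)$ uniformly in $R$, by applying the same convolution estimates directly to the mild equation and closing a Gronwall inequality; the linear growth of $f,\sigma_j$ and the fact that $|u|^{\nu} \in L^{\rho/\nu}(D)$ compensates the derivative loss in (E2) exactly when $\rho > \rho_0(\nu,d)$. Setting $\tau_R = \inf\{ t : |u_R(t)|_{\rho} \ge R \}$, the processes $u_R$ agree on $[0,\tau_R]$ and patch to a process $u$ on $[0,\sup_R \tau_R)$; the uniform bound with Chebyshev's inequality forces $\sup_R \tau_R = T$ almost surely, so there is no explosion and $u$ is a global $L^{\rho}(D)$-valued solution. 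Pathwise uniqueness follows the same lines: writing the difference of two solutions, using the locally Lipschitz bound for $g_i$ in (A5) and the Lipschitz bounds for $f,\sigma_j$, localizing at the stopping times where either solution exceeds $R$ in $L^{\rho}(D)$, applying Gronwall, and letting $R \to \infty$.

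Finally, when $\xi$ has a continuous modification, I would bound $\mathbb{E}|u(t,x) - u(t',x')|^p$ for large $p$: the deterministic drift increments in $t$ and in $x$ are controlled by (E4) (and, for the $y$-derivative of the kernel in the $g_i$ term, by the Remark following (E4)), and the stochastic convolution increments by the factorization/Burkholder argument. This yields $\mathbb{E}|u(t,x) - u(t',x')|^p \le C(|t-t'| + |x-x'|)^{\gamma p}$ with $\gamma p > d+2$ once $p$, hence $\rho$, is large enough, and the Kolmogorov--Chentsov theorem then gives the continuous modification. The main obstacle throughout is the a priori bound of the previous paragraph: the polynomial term $g_i$ is supercritical in high dimension and enters under a spatial derivative of the heat kernel, so it is felt with a genuine smoothing loss quantified by $\upsilon_p$ in (E2); balancing the integrability gained from a large exponent $\rho$ against this loss and the polynomial order $\nu$ in arbitrary dimension $d$ is precisely what fixes $\rho_0(\nu,d)$ and is the technical heart of the argument.
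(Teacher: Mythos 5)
The paper does not actually reprove this theorem: it is quoted verbatim from Gy\"ongy--Rovira \cite[Theorem 2.1]{GR}, and the remark that follows it records the strategy used there (truncation of the nonlinearities to bounded Lipschitz functions, an energy inequality \cite[Lemma 4.3]{GR} giving boundedness in probability, tightness in $C([0,T];L^{\rho}(D))$, and identification of the limit via Skorokhod representation). Measured against that, your proposal has one genuine gap, and it sits exactly at what you yourself call the technical heart: the a priori bound uniform in the truncation. You propose to obtain $\sup_{t\le T}\mathbb{E}|u_R(t)|_{\rho}^{\rho}\le C$ ``by applying the same convolution estimates directly to the mild equation and closing a Gronwall inequality.'' But $g_i$ has polynomial growth of order $\nu$, so the convolution estimate for the $\partial_{y_i}G$ term gives, with $y(t)=|u_R(t)|_{\rho}$ and some $\alpha=1+\upsilon_p-\lambda_p<1$, an inequality of the form
\begin{equation*}
y(t)\;\le\; C+C\int_0^t (t-s)^{-\alpha}\bigl(1+y(s)^{\nu}\bigr)\,ds .
\end{equation*}
For $\nu>1$ this is a superlinear Volterra inequality: no Gronwall-type lemma closes it globally in time, and its solutions can blow up in finite time. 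No choice of $\rho$ fixes this --- enlarging $\rho$ improves the integrability of the kernel but does not linearize the $y^{\nu}$ term. Since the theorem is claimed for arbitrary $\nu\ge 1$ and arbitrary $d$, your argument as written only yields local-in-time existence (or global existence for $\nu=1$), and consequently the stopping times $\tau_R$ need not exhaust $[0,T]$.

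What actually closes the estimate in \cite{GR} is not a mild-formulation convolution bound but an energy inequality: It\^o's formula for $|u|_{\rho}^{\rho}$ in the variational formulation, which exploits the fact that $g_i$ enters in divergence form $\partial_{x_i}g_i$. Integrating by parts moves the derivative onto $|u|^{\rho-2}u$, and the resulting term $\int_D g_i(u)\,\partial_{x_i}(|u|^{\rho-2}u)\,dx$ is absorbed, via Young's inequality, into the strictly negative contribution $-\kappa\rho(\rho-1)\int_D|\nabla u|^{2}|u|^{\rho-2}dx$ coming from the uniform ellipticity (A2). This is where the interplay between $\nu$, $d$ and $\rho_0(\nu,d)$ genuinely occurs, and it is also why \cite{GR} then proceeds by tightness and Skorokhod representation (the resulting control is only boundedness in probability, not a moment bound --- note also that $\xi$ is merely an $L^{\rho}$-valued random element, so moments of $|\xi|_{\rho}$ need not exist). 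The remaining ingredients of your proposal (contraction for the truncated equation, localization for uniqueness, Kolmogorov--Chentsov for the continuous modification) are sound, but without the divergence-structure energy estimate the global existence claim does not go through.
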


 \begin{remark}
{\it The proof of the above theorem is based on an approximation procedure where the nonlinear functions $f$, $g_i$ in equation (\ref{E:1}) are approximated by bounded Lipschitz functions.  It is proven that the solutions to the approximations of equation (\ref{E:1}) are bounded in probability in  $C([0,T] \times D)$ where an energy equality is used.  This results in the tightness of the sequence of approximated solutions in $C([0,T]: L^{\rho}(D))$.  It is then shown that the sequence of approximated solutions converges in probability to the unique solution of equation (\ref{E:1}).} 
 \end{remark}


\noindent The following three lemmas are used in proving the main theorem.

\begin{lemma} [{\cite[Corollary 3.6]{GR}}]\label{Gyongy-s} 
Set
$$\Upsilon(\phi^{\varepsilon})(t,x) := \int_0^t\int_D G_{t-s}(x,y) \phi^{\varepsilon}_j(t,y) dy dB^j(s), ~~~t\in[0,T], ~~x\in D,$$
where $\displaystyle \phi^\varepsilon := \{\phi^{\varepsilon}(t,x) = (\phi^{\varepsilon}_1(t,x) , \cdots ,\phi^{\varepsilon}_k(t,x) ): t\in[0,T], x\in D\}$ is a sequence of $\mathcal F_t$-adapted random fields.  If for $\rho>d$ we have a constant $C$ such that $\displaystyle |\phi^{\varepsilon}|_\rho \leqslant C$ for all $t\in [0,T]$ and for all $\varepsilon \in (0,1]$, then $\Upsilon(\phi^{\varepsilon})$ is tight in $C([0,T]\times D)$, uniformly in $\varepsilon$. In general, there is a number $\bar \rho>d$ such that if for $\rho>\bar \rho$ we have $\displaystyle \sup_{\varepsilon>0}{\mathbb E} (\sup_{t\leqslant T} |\phi^{\varepsilon}(t)|_\rho^\rho)<\infty$.  Then $\Upsilon(\phi^\varepsilon)$ is tight in $C([0,T]\times D),$ uniformly in $\varepsilon$.
\end{lemma}

 
 \noindent Let $q\geqslant 1$ and  $\displaystyle R(r,t;x,y) := \partial_yG_{r-t}(x,y)$ or $\displaystyle G(r,t;x,y)$ for $t\in [0,T]$ and $x\in D$.  For $v\in L^{\infty}([0,T]: L^q(D))$ define the linear operator $J$ by\\

$\displaystyle J(v)(t,x) := \int_0^t\int_D R(r,t;x,y,) v(r,y) dy dr, ~~~~~~~~~~t\in[0,T], ~~~x\in D,$\\

$\displaystyle J(v)(t,x) := 0 ~~~~{\mbox {if}} ~~x\notin D,$\\

\noindent provided the integral exists.

\begin{lemma}[{\cite[Corollary 3.2]{GR}}] \label{Gyongy}
Assume (E1)-(E3) with $\kappa_p>0$.  Let $\zeta_n(t,x)$ be a sequence of random fields on $[0,T]\times D$ such that almost surely

$$|\zeta_n(t,\cdot)|_q\leqslant \vartheta_n, ~~~~{\mbox{for all}}~~ t\in [0,T],$$

\noindent where $\vartheta_n$ is a finite random variable for every $n$.  Assume that the sequence $\vartheta_n$ is bounded in probability, i.e.

$$\lim_{C\to \infty} \sup_{n\in {\mathbb N}}P(\vartheta_n\geqslant C) =0$$

\noindent Then for $0\leqslant \alpha<\min(\kappa_p/v_p, \kappa_p)$, the sequence $J(\zeta_n)$ is uniformly tight in $C^\alpha([0,T]: L^\rho(D))$.  In the case $\rho=\infty$ the sequence $J(\zeta_n)$ is tight in the  $C^{\alpha, \beta}([0,T]: L^\rho(D))$ for $0\leqslant \alpha< \min (\kappa_p/v_p,\kappa_p)$, $0\leqslant \beta<\min(\kappa_p/v_p, 1)$.
\end{lemma}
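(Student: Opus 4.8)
The plan is to eliminate the randomness at the very start. Since the $\theta_n$ enter the hypothesis only as pathwise multiplicative bounds on $|\zeta_n(t,\cdot)|_p$, I would first prove two \emph{deterministic} estimates, uniform in $n$ and in $t\in[0,T]$: (a) a spatial-smoothing bound $|J(\zeta_n)(t)|_{W^{s,\rho}(D)}\le C\,\theta_n$ for some $s>0$; and (b) a time-H\"older bound $|J(\zeta_n)(t_2)-J(\zeta_n)(t_1)|_\rho\le C\,\theta_n\,|t_2-t_1|^{\alpha'}$ for every fixed $\alpha'<\min(\lambda_p/\varepsilon_p,\lambda_p)$. Granting these, tightness is immediate: because $D$ is bounded, $W^{s,\rho}(D)$ is compactly embedded in $L^\rho(D)$, so for each $M>0$ the set
\[
\mathcal{K}_M\doteq\Big\{\,v\in C\big([0,T];L^\rho(D)\big):\ \sup_{t\in[0,T]}|v(t)|_{W^{s,\rho}(D)}\le M,\ \ \sup_{t_1\neq t_2}\frac{|v(t_2)-v(t_1)|_\rho}{|t_2-t_1|^{\alpha'}}\le M\,\Big\}
\]
is relatively compact in $C^{\alpha}\big([0,T];L^\rho(D)\big)$ for every $\alpha<\alpha'$, by the Arzel\`a--Ascoli theorem (precompact values in $L^\rho(D)$ together with a uniform $\alpha'$-H\"older modulus, followed by the elementary passage from exponent $\alpha'$ down to $\alpha<\alpha'$). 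Since $\theta_n$ is bounded in probability, given $\delta>0$ I pick $M_\delta$ with $\sup_n P(C\theta_n>M_\delta)<\delta$, and then (a)--(b) give $J(\zeta_n)\in\mathcal{K}_{M_\delta}$ with probability at least $1-\delta$ for every $n$, which is the asserted uniform tightness of $\{J(\zeta_n)\}_n$.

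For (b), and for the plain $L^\rho$ bound which is the special case $\alpha'=0$ of (a), I would exploit the convolution structure of the kernel. Since $|R(r,t;x,y)|\le a(t-r,x-y)$ when $R=G_{t-r}$ and $|R(r,t;x,y)|\le b(t-r,x-y)$ when $R=\partial_yG_{t-r}$, extending $\zeta_n(r,\cdot)$ by zero outside $D$ gives $|J(\zeta_n)(t,x)|\le\int_0^t\big(a(t-r,\cdot)*|\zeta_n(r,\cdot)|\big)(x)\,dr$; Minkowski's integral inequality together with Young's convolution inequality then yield $|J(\zeta_n)(t)|_\rho\le\int_0^t|a(t-r,\cdot)|_m\,|\zeta_n(r,\cdot)|_p\,dr$ for the appropriate exponent $m$ (using also that on the bounded domain $D$ a bound in $L^p$ controls the lower $L^q$-norms), and (E1) makes this $\le C\theta_n\int_0^t(t-r)^{-1+\lambda_m}\,dr<\infty$ precisely because $\lambda_p>0$. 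Writing $J(\zeta_n)(t_2)-J(\zeta_n)(t_1)$ as the sum of the ``fresh'' piece $\int_{t_1}^{t_2}\!\int_D R(r,t_2;x,y)\zeta_n(r,y)\,dy\,dr$, handled exactly as above and contributing a factor $|t_2-t_1|^{\lambda_m}$, and the ``aged'' piece $\int_0^{t_1}\!\int_D\big[R(r,t_2;x,y)-R(r,t_1;x,y)\big]\zeta_n(r,y)\,dy\,dr$, I would rewrite the bracket as $\int_{t_1}^{t_2}\partial_\tau R(r,\tau;x,y)\,d\tau$ and bound $|\partial_\tau R(r,\tau;x,y)|\le c(\tau-r,x-y)$ by (E3); Young's inequality and (E3) then reduce the aged piece to controlling the double time integral $\int_0^{t_1}\!\int_{t_1}^{t_2}(\tau-r)^{-1-\varepsilon_m+\lambda_m}\,d\tau\,dr$, and by interpolating the $(\tau-r)$-singularity against a power of $|t_2-t_1|$ (that is, by optimizing the exponent in the standard estimate of this Beta-type integral) one gets $\le C|t_2-t_1|^{\alpha'}$ for every $\alpha'<\min(\lambda_p/\varepsilon_p,\lambda_p)$. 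Making this exponent sharp is the technical heart of the argument.

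For the spatial-smoothing bound (a), and for the finer conclusion when $\rho=\infty$, I would differentiate $J(\zeta_n)(t,x)$ under the integral and invoke (E4): even for $R=\partial_yG_{t-r}$ there remain spatial derivatives to spare, since $|D_x^\gamma\partial_{y}G_{t-r}(x,y)|\le K(t-r)^{-(d+1+|\gamma|)/2}\exp\!\big(-C|x-y|^2/(t-r)\big)$ as long as $1+|\gamma|\le3$, whence $|D_x^\gamma J(\zeta_n)(t)|_\rho\le C\theta_n$ for every $\gamma$ for which the resulting time singularity is still integrable; real interpolation between these estimates and the $L^\rho$ bound then gives $|J(\zeta_n)(t)|_{W^{s,\rho}(D)}\le C\theta_n$ for some $s>0$, which is (a). When $\rho=\infty$ one estimates instead $|J(\zeta_n)(t,x_1)-J(\zeta_n)(t,x_2)|$ directly, interpolating the pointwise bound on the kernel against its $x$-Lipschitz bound coming from (E2) and (E4); this produces a spatial H\"older seminorm bounded by $C\theta_n$ for every exponent $\beta<\min(\lambda_p/\upsilon_p,1)$, giving tightness of $J(\zeta_n)$ in $C^{\alpha,\beta}\big([0,T];L^\infty(D)\big)$ for $\alpha<\min(\lambda_p/\varepsilon_p,\lambda_p)$ and $\beta<\min(\lambda_p/\upsilon_p,1)$. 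I expect the main obstacle to be the exponent bookkeeping just described --- making the aged-piece estimate deliver $|t_2-t_1|^{\alpha'}$ with $\alpha'$ arbitrarily close to $\min(\lambda_p/\varepsilon_p,\lambda_p)$, uniformly over $0\le t_1<t_2\le T$ --- together with checking that, for the Burgers-type gradient kernel $R=\partial_yG$, the estimates (E2)--(E4) still leave a strictly positive Sobolev exponent $s$ after one pays for the extra $y$-derivative, which is exactly what keeps the compact-embedding step alive.
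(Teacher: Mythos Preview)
The paper does not prove this lemma at all: it is quoted verbatim as {\cite[Corollary 3.2]{GR}} from Gy\"ongy and Rovira (2000) and used as a black box in the proof of Theorem~\ref{convergence}. There is therefore no ``paper's own proof'' to compare your proposal against.

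That said, your sketch follows the standard route one would expect for such a result --- deterministic pathwise H\"older/Sobolev bounds on $J(\zeta_n)$ in terms of $\theta_n$, then Arzel\`a--Ascoli combined with a compact embedding, then the passage from boundedness in probability of $\theta_n$ to uniform tightness --- and this is essentially the structure of the original argument in \cite{GR}. Your identification of the two main technical points (sharpening the exponent in the ``aged'' piece to reach every $\alpha'<\min(\lambda_p/\varepsilon_p,\lambda_p)$, and ensuring enough residual spatial regularity survives the extra $y$-derivative when $R=\partial_yG$) is accurate. If you want to carry this out in full you should consult \cite[Lemma~3.1 and Corollary~3.2]{GR} directly, since the present paper offers no details.
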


\begin{lemma}[{\cite[Lemma 3.1] {GR}}] \label{lem:GR-3.1}
 Let $\rho \in [1,\infty]$, $q \in  [1,\rho]$, and $p:=(1+1/p-1/q)^{-1}$. Let $\displaystyle\kappa_p:= \frac{1}{2}(d/p-d+2-|\delta|)$. Let $\alpha_0 := \min\{\kappa_p/v_p, \kappa_p\}$, $\beta_0:= \min\{\kappa_p/\theta_p, 1/\rho\}$. The following statements hold.
 \begin{enumerate}[(i)]
   \item $J$ is a bounded linear operator from $L^\gamma([0,T]:L^q(D)) \to C([0,T]:L^\rho(D))$ for every $\gamma>1/\kappa_p$ and there exists $C>0$ such that
       \begin{equation} \label{eq:GR-3.1-i}
         |J(v)(t,\cdot)|_\rho \leq C\int_0^t(t-s)^{\kappa_p-1}|v(s)|_qds \leq C t^{\kappa_\rho - \frac{1}{\gamma}} \left(\int_0^t |v(s)|_q^\gamma ds \right).
       \end{equation}
   \item For every $\alpha \in (0,\alpha_0)$, $\gamma> (\alpha_0-\alpha)^{-1}$ there exists $C>0$ such that
       \begin{equation} \label{eq:GR-3.1-ii}
         |J(v)(t,\cdot) - J(v)(s,\cdot)|_\rho \leq C|t-s|^\alpha \left(\int_0^{t\vee s} |v(r)|_q^\gamma \right)^{\frac{1}{\gamma}}.
       \end{equation}
   \item Let $\beta \in (0,\beta_0)$. Assume that $\kappa_{p'}>0$ for $p':=(1/p - \beta)^{-1}$. Then for every $\gamma>(\beta_0 - \beta)^{-1}$, there exists $C>0$ such that
       \begin{equation} \label{eq:GR-3.1-iii}
         |J(v)(t,\cdot) - J(v)(t,\cdot + \zeta)|_\rho \leq C|\zeta|^\beta \left(\int_0^t |v(r)|_q^\gamma dr \right)^{\frac{1}{\gamma}}.
       \end{equation}
   \item If $\rho=\infty$ then for $0< \beta < \min\{\kappa_p/\theta_p,1\}=:\beta'$ and $\gamma>(\beta'-\beta)^{-1}$, there is a constant $C>0$ such that
       \begin{equation} \label{eq:GR-3.1-iv}
         |J(v)(t,\xi) - J(v)(t,\xi+\zeta)| \leq C|\zeta|^\beta \left(\int_0^t |v(r)|^\gamma_q \right)^{\frac{1}{q}}.
       \end{equation}
 \end{enumerate}
\end{lemma}
\section{The large deviations principle }

In this section, we state Theorem \ref{LDP_original} {\cite[Theorem 6]{BDM}}, which asserts the uniform Laplace principle for a family of functionals of a cylindrical Wiener process under two main assumptions.  Let $(\Omega, {\mathcal F}, {\mathbb F}, P)$ be the filtered probability space introduced as before. 
Denote by ${\mathcal A}_2$ the set of predictable processes which belong to $L^2(\Omega \times [0,T] :{\mathbb R}^k)$.  For any $N>0, N \in {\mathbb N}$, define

\begin{align*}
\Lambda^N & :=  \left\{\tau \in L^2([0,T]: {\mathbb R}^k): \int_0^T |\tau|^2ds  \leq N\right\}, \\
{\mathcal{A}}^N_2 & := \left\{\eta(\omega) \in \Lambda^N: \eta\in   {\mathcal A}_2 ,  ~~ P-a.s. \right\},
 \end{align*}
 
\noindent where $|\cdot|$ is the norm in ${\mathbb R}^k$.  Note that $\Lambda^N$ is a compact metric space endowed with the weak topology from $L^2([0,T]: {\mathbb R}^k)$. The space ${\mathcal A}_2^N$  is the space of admissible controls, and plays an important role in the weak convergence approach to the theory of large deviations.  Let ${\mathcal E}_0$ and ${\mathcal E}$ be Polish spaces.  Assume that the initial condition takes values in a compact subspace of ${\mathcal E}_0$, and denote the solution space by ${\mathcal E}$.  For every $\varepsilon\in(0,1]$, let  ${\mathcal H}^{\varepsilon} : {\mathcal E}_0 \times  {\mathcal C}([0,T]: {\mathbb R}^{k}) \rightarrow {\mathcal E}$ be a family of measurable maps, and define $Y^{\varepsilon}_\xi :=  {\mathcal H}^{\varepsilon} (\xi, \sqrt{\varepsilon }B)$.  For a control $\varphi\in {\mathcal{A}}_2^N$, and under the measure $Q$ defined by
$$
\frac{dQ}{dP} := \exp\left\{-\frac{1}{\sqrt{\varepsilon}}\int_0^T \langle \varphi(s) , dB(s) \rangle -\frac{1}{2\varepsilon}\int_0^T|\varphi(s)|^2ds\right\},
$$
\noindent Girsanov's theorem implies that the process
\begin{align*}
  \bar{B}(t) := B(t) + {\varepsilon}^{-1/2} \int_0^t \varphi(s) ds,
\end{align*}
is a $k$-dimensional Wiener process.  The following is the standing assumption of Theorem \ref{LDP_original}, the large deviations principle of \cite{BDM}. \\

\noindent ASSUMPTION:  There exists a measurable map ${\mathcal H}^0: {\mathcal E}_0 \times L^2([0,T]: {\mathbb R}^k) \rightarrow {\mathcal E}$, such that\\
 
 \begin{enumerate}[(S1)]
   \item For every $M<\infty$ and compact set $K \subset {\mathcal E}_0$, the set 
 $$
 \Gamma_{M,K} := \bigg\{{\mathcal H}^0\bigg(\xi, \int_0^t \varphi(s) ds\bigg): \varphi \in \Lambda^M, \xi\in K\bigg\},
 $$
 is a compact subset of ${\mathcal E}$.\\
 
  \item Consider $M<\infty$ and the families $\{\varphi^{\varepsilon}\}_{\varepsilon>0} \subset {\mathcal A}^M_2$ and $\{\xi^{\varepsilon}\}_{\varepsilon>0} \subset {\mathcal E}_0$, such that $\varphi^{\varepsilon}\rightarrow \varphi$, and $\xi^{\varepsilon} \rightarrow \xi$ in distribution, as $\varepsilon \rightarrow 0$. Then
$$
{\mathcal H}^{\varepsilon}\bigg(\xi^{\epsilon}, \sqrt{\varepsilon} B + \int_0^t \varphi^{\varepsilon}(s) ds\bigg) 
\longrightarrow 
{\mathcal H}^0\bigg(\xi, \int_0^t \varphi (s) ds \bigg), 
$$
\noindent in distribution as $\varepsilon \rightarrow 0$.\\
 \end{enumerate}

For $\psi\in{ \mathcal E}$, and $\xi \in {\mathcal E}_0$, define the rate function
\begin{equation}\label{E:rate function}
I_\xi(\psi) := \inf_{{\{\beta\in L^2([0,T]: {\mathbb R}^k) :\, \psi := {\mathcal H}^0(\xi, \int_0^t \beta(s) ds)\}}} \left\{\frac{1}{2}\int_0^T |\beta(s)|^2 ds\right\},
\end{equation}

\noindent where $\inf(\varnothing)=+\infty$.\\

\noindent The following theorem states the uniform Laplace principle for the family $\{Y^{\varepsilon}_ \xi\}$.

\begin{theorem} [{\cite[Theorem 6]{BDM}}] \label{LDP_original}
Let ${\mathcal H}^0: {\mathcal E}_0 \times C([0,T]: {\mathbb R}^d) \rightarrow {\mathcal E}$ be a measurable map satisfying assumptions (S1) and (S2).  Suppose that for all $f\in{\mathcal E}$, $\xi\mapsto I_{\xi}(f)$ is a lower semi-continuous map from ${\mathcal E}_0$ to $[0, \infty]$. Then for every $\xi \in {\mathcal E}_0 $, $I_{\xi}(\psi): {\mathcal E}\rightarrow [0,\infty]$, is a rate function on ${\mathcal E}$ and the family $\{I_{\xi}, \xi \in \mathcal{E}_0\}$ of rate functions has compact level sets on compacts.  Furthermore, the family $\{Y^{\varepsilon}_\xi\}$ satisfies the uniform Laplace principle on ${\mathcal E}$  with rate function $I_{\xi}$, uniformly in $\xi$ on compact subsets of ${\mathcal E}_0$.

\end{theorem}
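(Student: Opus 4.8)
The statement is the abstract sufficient condition from \cite{BDM} for a uniform Laplace principle, so the plan is to recall how it is deduced from the variational representation for exponential functionals of a finite dimensional Wiener process. Fix a bounded continuous $h:{\mathcal E}\to{\mathbb R}$ and $\xi\in{\mathcal E}_0$. First I would write $h(Y^{\varepsilon}_{\xi})=h\big({\mathcal H}^{\varepsilon}(\xi,\sqrt{\varepsilon}B)\big)$ and apply the Bou\'e--Dupuis representation (obtained via the Girsanov change of measure set up above) to get
$$-\varepsilon\log E\Big[\exp\big(-\tfrac{1}{\varepsilon}h(Y^{\varepsilon}_{\xi})\big)\Big]=\inf_{v\in{\mathcal A}_2}E\bigg[\tfrac12\int_0^T|v(s)|^2\,ds+h\Big({\mathcal H}^{\varepsilon}\big(\xi,\sqrt{\varepsilon}B+\int_0^{\cdot}v(s)\,ds\big)\Big)\bigg].$$
From here the two matching Laplace bounds follow from (S1)--(S2), and the rate-function properties from (S1) and the assumed lower semicontinuity of $\xi\mapsto I_{\xi}(f)$.

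For the Laplace upper bound I would fix $\delta>0$, choose $\beta\in L^2([0,T];{\mathbb R}^k)$ with $\psi_0\doteq{\mathcal H}^0(\xi,\int_0^{\cdot}\beta(s)\,ds)$ and $h(\psi_0)+\tfrac12\int_0^T|\beta(s)|^2\,ds\le\inf_{\psi\in{\mathcal E}}\{h(\psi)+I_{\xi}(\psi)\}+\delta$ (possible by the definition of $I_{\xi}$), insert the constant control $v\equiv\beta$ into the representation, and invoke assumption (S2) with $\varphi^{\varepsilon}\equiv\beta$ and $\xi^{\varepsilon}\equiv\xi$ together with bounded convergence to obtain $\limsup_{\varepsilon\to0}-\varepsilon\log E[\exp(-h(Y^{\varepsilon}_{\xi})/\varepsilon)]\le\tfrac12\int_0^T|\beta(s)|^2\,ds+h(\psi_0)\le\inf_{\psi}\{h+I_{\xi}\}+\delta$; letting $\delta\downarrow0$ closes this half.

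For the Laplace lower bound I would take controls $v^{\varepsilon}\in{\mathcal A}_2$ that realize the infimum in the representation up to $\varepsilon$; since $h$ is bounded, comparison with $v\equiv0$ bounds $\sup_{\varepsilon}E\int_0^T|v^{\varepsilon}(s)|^2\,ds$, and a standard truncation then lets me assume $v^{\varepsilon}\in{\mathcal A}^M_2$ for a fixed $M=M(\|h\|_{\infty})$, at a cost tending to $0$. As $\Lambda^M$ is compact in the weak topology, along a subsequence $v^{\varepsilon}\to v$ in distribution, and (S2) yields ${\mathcal H}^{\varepsilon}(\xi,\sqrt{\varepsilon}B+\int_0^{\cdot}v^{\varepsilon}\,ds)\to{\mathcal H}^0(\xi,\int_0^{\cdot}v\,ds)$ in distribution. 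Weak lower semicontinuity of $\tau\mapsto\int_0^T|\tau|^2\,ds$, continuity of $h$, and Fatou's lemma then give
$$\liminf_{\varepsilon\to0}-\varepsilon\log E\big[\exp(-h(Y^{\varepsilon}_{\xi})/\varepsilon)\big]\ge E\bigg[\tfrac12\int_0^T|v(s)|^2\,ds+h\Big({\mathcal H}^0\big(\xi,\int_0^{\cdot}v(s)\,ds\big)\Big)\bigg]\ge\inf_{\psi\in{\mathcal E}}\{h(\psi)+I_{\xi}(\psi)\},$$
the last step again from the definition of $I_{\xi}$. Together with the upper bound this is the Laplace principle for each fixed $\xi$, which on the Polish space ${\mathcal E}$ is equivalent to the LDP \cite{DE}.

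To conclude I would note that the level set $\{\psi:I_{\xi}(\psi)\le M\}$ equals (the closure of) the image of the weakly compact $L^2$-ball of radius $\sqrt{2M}$ under $\beta\mapsto{\mathcal H}^0(\xi,\int_0^{\cdot}\beta(s)\,ds)$, which is compact by (S1) with $K=\{\xi\}$; hence $I_{\xi}$ is a rate function, and letting $K$ range over compact subsets of ${\mathcal E}_0$ gives compactness of level sets on compacts, while the assumed lower semicontinuity of $\xi\mapsto I_{\xi}(f)$ supplies the remaining part of that statement. For the \emph{uniform} Laplace principle over a compact $K\subset{\mathcal E}_0$, I would rerun the two bounds above with $\xi$ replaced throughout by an arbitrary sequence $\xi^{\varepsilon}\to\xi$ in $K$. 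The main obstacle is exactly this uniformity: in the lower bound the truncation of $v^{\varepsilon}$ into ${\mathcal A}^M_2$ and the extraction of a weakly convergent subsequence must be carried out uniformly in the initial datum, and it is the joint-convergence formulation of (S2) together with the compactness provided by (S1) that make this possible. Modulo that bookkeeping, the argument is the standard weak-convergence proof of \cite{BDM}.
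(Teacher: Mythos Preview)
The paper does not prove this theorem at all; it is quoted verbatim from \cite[Theorem~6]{BDM} and used as a black box, the entire content of the present paper being the verification of (S1) and (S2) for the specific SPDE (\ref{E:1}). So there is no ``paper's own proof'' to compare against.

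Your sketch is a correct and faithful outline of the original Budhiraja--Dupuis--Maroulas argument: the Bou\'e--Dupuis variational representation, a deterministic near-optimal control for the upper bound, tightness and weak-$L^2$ compactness of near-optimal controls plus (S2) for the lower bound, and (S1) for compactness of level sets. The only place where one must be slightly careful is the truncation step in the lower bound---passing from $v^{\varepsilon}\in{\mathcal A}_2$ with bounded \emph{expected} $L^2$-norm to controls in ${\mathcal A}_2^M$---which in \cite{BDM,BD} is handled by a stopping argument rather than a simple cutoff, but you flag this and it does not affect the logic of the outline.
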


\begin{remark}
{\it  In order to prove Theorem \ref{LDP}, it suffices to verify assumptions (S1) and (S2) with ${\mathcal E}_0 := L^\rho(D)$  and ${\mathcal E} := C([0,T]: L^\rho(D))$.}
\end{remark}


\section{Controlled and skeleton equations}  
\noindent The solution to equation (\ref{E:1}) is the random field $u^\varepsilon$ which we set to be the map ${\mathcal H}^\varepsilon(\xi, \sqrt\varepsilon B)$.   $v^{\varepsilon,\varphi}_{\xi}(t, x) $ which we denote by the map ${\mathcal H}^\varepsilon\big(\xi, \sqrt\varepsilon B+ \int_0^t\varphi(s)ds\big)$ is referred to as the controlled equation (or process) with the following mild form
\begin{align}
\label{E:controlled_process}
v^{{\varepsilon}, \varphi}_{\xi}(t,x)=& \nonumber \int_D G_t(x,y)\xi(y)dy  -\int_0^t\int_0^1\partial_{y_i}G_{t-s}(x,y)g_i(s, v^{{\varepsilon}, \varphi}_{\xi}(s))(y)dyds \\&\nonumber+\int_0^t\int_D G_{t-s}(x,y)f(s, v^{{\varepsilon}, \varphi}_{\xi}(s))(y)dyds\\& + \varepsilon \int_0^t\int_D G_{t-s}(x,y) \sigma_j(s, v^{{\varepsilon}, \varphi}_{\xi}(s))(y)\varphi_j(s)dyds\nonumber\\& + \sqrt{\varepsilon} \int_0^t\int_D G_{t-s}(x,y) \sigma_j(s, v^{{\varepsilon}, \varphi}_{\xi}(s))(y)dydB^j(s) 
\end{align}

\noindent $v^{0,\varphi}_{\xi}(t, x)$ denoted by the map  ${\mathcal H}^0\big(\xi, \int_0^t \varphi(s) ds)\big)$ is referred to as the skeleton equation with the following mild form
  
\begin{align}
\label{E:limiting_process}
\nonumber v^{0, \varphi}_{\xi}(t,x)=& \int_D G_t(x,y)\xi(y)dy -\int_0^t\int_D\partial_{y_i}G_{t-s}(x,y)g_i(s, v^{0, \varphi}_{\xi}(s))(y)dyds\\&\nonumber+\int_0^t\int_D G_{t-s}(x,y)f(s, v^{0, \varphi}_{\xi}(s))(y)dyds\\&+ \int_0^t\int_D G_{t-s}(x,y) \sigma_j(s, v^{0, \varphi}_{\xi}(s))(y)\varphi_j(s)dyds.
\end{align}

\subsection{The rate function} Let $\psi\in C\big([0,T]: L^\rho(D)\big)$, for every $t\in [0, T]$, and $x\in D$. Define the following rate function (or action functional)

\begin{equation}\label{ratefunction}
I_\xi(\psi) := \frac{1}{2}\inf_{\beta}\int_0^T|\beta(s)|^2 ds,
\end{equation}

\noindent where the infimum is taken over all $\beta\in L^2([0,T] : {\mathbb R}^k)$ such that

\begin{align}
\label{E:Deterministic}
&\nonumber \psi(t,x)= \int_D G_t(x,y)\xi(y)dy -\int_0^t\int_D\partial_{y_i}G_{t-s}(x,y)g_i(s, \psi(s))(y)dyds\\&\nonumber+\int_0^t\int_D G_{t-s}(x,y)f(s, \psi(s))(y)dyds\\&+ \int_0^t\int_D G_{t-s}(x,y) \sigma_j(s, \psi(s))(y)\beta_j(s)dyds.
\end{align}

\subsection{Existence and uniqueness of controlled process.} 

\noindent The following theorem  (\cite[Theorem 10]{BDM}), asserts the existence and uniqueness of the controlled process, with the main ingredient of proof being the Girsonov's theorem \cite[Theorem 10.14]{DZ}.

\begin{theorem}[Existence $\&$ uniqueness of controlled process]\label{existence_controlled} Let ${\mathcal H}^{\varepsilon}$ denote the solution mapping, and let $\varphi\in {\mathcal A}^N_2$ for some $N\in {\mathbb N}$.  For $\varepsilon>0$ and $\xi \in L^\rho(D)$ define
$$
v^{\varepsilon, \varphi}_{\xi} := {\mathcal H}^{\varepsilon}\bigg(\xi, \sqrt{\varepsilon}B+ \int_0^t\varphi(s) ds\bigg),
$$
then $v^{\varepsilon, \varphi}_{\xi}$ is the unique solution of equation (\ref{E:controlled_process}).
\end{theorem}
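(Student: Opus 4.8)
The plan is to reduce the existence and uniqueness of the controlled process $v^{\varepsilon,\varphi}_\xi$ to the existence and uniqueness result for the uncontrolled equation, Theorem \ref{existence}, via Girsanov's theorem. First I would observe that, for a fixed $\varphi\in{\mathcal A}^N_2$, the Radon--Nikodym derivative
$$
\frac{dQ}{dP}=\exp\left\{-\frac{1}{\sqrt\varepsilon}\int_0^T\langle\varphi(s),dB(s)\rangle-\frac{1}{2\varepsilon}\int_0^T|\varphi(s)|^2\,ds\right\}
$$
defines a genuine probability measure $Q$ equivalent to $P$: since $\varphi\in\Lambda^N$ is uniformly $L^2$-bounded, Novikov's condition holds trivially and the exponential is a true martingale. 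Under $Q$, the process $\bar B(t)=B(t)+\varepsilon^{-1/2}\int_0^t\varphi(s)\,ds$ is a $k$-dimensional Brownian motion with respect to the same filtration ${\mathbb F}$.

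Next I would recall that $u^\varepsilon={\mathcal H}^\varepsilon(\xi,\sqrt\varepsilon B)$ is, by Theorem \ref{existence}, the unique mild solution of equation (\ref{E:1}) driven by $\sqrt\varepsilon B$ on the stochastic basis $(\Omega,{\mathcal F},{\mathbb F},P)$. By the very definition of ${\mathcal H}^\varepsilon$ as a measurable map on path space, the random field $v^{\varepsilon,\varphi}_\xi\doteq{\mathcal H}^\varepsilon(\xi,\sqrt\varepsilon B+\int_0^\cdot\varphi(s)\,ds)={\mathcal H}^\varepsilon(\xi,\sqrt\varepsilon\bar B)$ is obtained by feeding the shifted driver into the same solution map. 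Since $\bar B$ is a Brownian motion under $Q$ and $(\Omega,{\mathcal F},{\mathbb F},Q)$ is again a stochastic basis carrying a $k$-dimensional Wiener process, Theorem \ref{existence} applies verbatim on this new basis: $v^{\varepsilon,\varphi}_\xi$ is the unique mild solution, under $Q$, of
$$
\frac{\partial}{\partial t}v=\Big[\partial_{x_i}\big(b_{ij}\partial_{x_j}v+g_i(t,x,v)\big)+f(t,x,v)\Big]+\sqrt\varepsilon\,\sigma_j(t,x,v)\,\frac{d}{dt}\bar B^j .
$$
Rewriting the stochastic integral against $\bar B^j$ in terms of $B^j$ via $d\bar B^j(s)=dB^j(s)+\varepsilon^{-1/2}\varphi_j(s)\,ds$ and using $\sqrt\varepsilon\cdot\varepsilon^{-1/2}=1$ produces exactly the extra drift term $\int_0^t\int_D G_{t-s}(x,y)\sigma_j(s,v^{\varepsilon,\varphi}_\xi(s))(y)\varphi_j(s)\,dy\,ds$ appearing in (\ref{E:controlled_process}); so $v^{\varepsilon,\varphi}_\xi$ solves (\ref{E:controlled_process}) under $Q$. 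Because $Q\sim P$, a property holding $Q$-a.s.\ holds $P$-a.s.\ and conversely, so $v^{\varepsilon,\varphi}_\xi$ is a mild solution of (\ref{E:controlled_process}) on the original basis as well.

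For uniqueness on $(\Omega,{\mathcal F},{\mathbb F},P)$, I would argue that any two mild solutions of (\ref{E:controlled_process}) are, after changing back to the measure $Q$, two mild solutions of the $\bar B$-driven equation above; pathwise uniqueness there (again from Theorem \ref{existence}, applied on the $Q$-basis) forces them to coincide $Q$-a.s., hence $P$-a.s. The only point needing a little care — and the step I expect to be the main obstacle — is the measurability bookkeeping: one must check that the solution map ${\mathcal H}^\varepsilon$ is well defined as a measurable map on $C([0,T];{\mathbb R}^k)$ independently of the underlying probability measure, so that ``plugging in $\sqrt\varepsilon\bar B$'' is legitimate and yields the $Q$-solution. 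This is where \cite[Theorem 10]{BDM} and the construction of ${\mathcal H}^\varepsilon$ via the approximation scheme of \cite{GR} are invoked; once the measurable-selection / Yamada--Watanabe-type argument giving a measurable solution functional is in hand, the Girsanov change of measure does the rest with no new estimates. I would therefore structure the write-up as: (i) $Q$ is a probability measure and $\bar B$ a $Q$-Wiener process; (ii) ${\mathcal H}^\varepsilon$ is a measurable functional of the driving path, so $v^{\varepsilon,\varphi}_\xi={\mathcal H}^\varepsilon(\xi,\sqrt\varepsilon\bar B)$ solves the $\bar B$-equation under $Q$; (iii) algebraic rearrangement of the noise term identifies this with (\ref{E:controlled_process}); (iv) transfer existence and uniqueness between $P$ and $Q$ using equivalence of measures.
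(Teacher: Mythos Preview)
Your proposal is correct and matches the paper's own treatment: the paper does not give a detailed proof but simply cites \cite[Theorem~10]{BDM} and states that ``the main ingredient of proof [is] Girsanov's theorem,'' which is precisely the argument you have sketched. Your write-up (Novikov for $\varphi\in{\mathcal A}^N_2$, $\bar B$ a $Q$-Wiener process, apply Theorem~\ref{existence} on the $Q$-basis, rewrite $d\bar B$ in terms of $dB$ to obtain (\ref{E:controlled_process}), transfer via $Q\sim P$) is exactly the standard Girsanov reduction the paper is invoking.
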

\subsection{Existence and uniqueness of skeleton.} 
\noindent The next Theorem shows the existence and uniqueness of the skeleton equation whose proof is almost verbatim to that of Theorem \ref{existence}, and thus omitted.
\begin{theorem}[Existence $\&$ uniqueness of skeleton]\label{uniqueness} Fix $\xi\in L^{\rho}(D)$ and $\varphi \in L^2([0,T] : {\mathbb R}^k)$.  Then there exists a unique function $\psi\in \mathcal C\big([0,T]: L^{\rho}(D)\big)$ which satisfies equation (\ref{E:Deterministic}).
\end{theorem}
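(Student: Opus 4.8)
The plan is to prove existence and uniqueness of the skeleton equation (\ref{E:deterministic}) by a standard fixed-point argument in the Banach space $\mathcal C\big([0,T];L^\rho(D)\big)$, following the same scheme used by Gy\"ongy and Rovira for Theorem \ref{existence}, but with the stochastic convolution term replaced by the deterministic control term $\int_0^t\int_D G_{t-s}(x,y)\sigma^j(s,\psi(s))(y)\varphi_j(s)\,dy\,ds$. First I would define the solution operator $\mathcal T$ on $\mathcal C\big([0,T];L^\rho(D)\big)$ by letting $\mathcal T\psi$ be the right-hand side of (\ref{E:deterministic}), and verify that $\mathcal T$ maps the space into itself: the initial term $\int_D G_t(x,y)\xi(y)\,dy$ lies in $C\big([0,T];L^\rho(D)\big)$ because $\xi\in L^\rho(D)$ and by the heat-kernel estimate (E1); the $f$-term and the $\sigma$-$\varphi$ term are controlled using (E1), the linear growth assumption (A4), and — for the control term — the Cauchy–Schwarz inequality in $s$ together with $\int_0^T|\varphi(s)|^2\,ds<\infty$; and the $g_i$-term is handled by (E2) together with the polynomial growth bound (A3), exactly as in \cite{GR}, which is precisely where the restriction $\rho>\rho_0(\nu,d)$ enters (one needs $\lambda_p-\upsilon_p>0$ for a suitable $p$ tied to $\nu$ and $d$ so that the time singularity of $\partial_yG$ is integrable after using H\"older in the spatial variable).

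Next I would establish a local contraction estimate. Using the Lipschitz-type hypotheses (A5), for $\psi_1,\psi_2\in\mathcal C\big([0,T];L^\rho(D)\big)$ I would bound $\big|\mathcal T\psi_1(t)-\mathcal T\psi_2(t)\big|_\rho$ term by term. The $f$-difference contributes $\le L\int_0^t |a(t-s,\cdot)|_{p'} |\psi_1(s)-\psi_2(s)|_\rho\,ds$ for appropriate conjugate exponents via Young/H\"older, and similarly for the control term, where the extra factor $|\varphi_j(s)|$ is absorbed by Cauchy–Schwarz against $\big(\int_0^t|\psi_1(s)-\psi_2(s)|_\rho^2\,ds\big)^{1/2}$. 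The $g_i$-difference uses the bound $|g_i(s,r)-g_i(s,s')|\le L(1+|r|^{\nu-1}+|s'|^{\nu-1})|r-s'|$, and here one applies H\"older in the spatial variable to split off the factor $(1+|\psi_1|^{\nu-1}+|\psi_2|^{\nu-1})$ in some $L^{m}(D)$-norm (finite since $\psi_i\in L^\rho(D)$ with $\rho$ large) against $|\psi_1(s)-\psi_2(s)|_\rho$, with the heat-kernel gradient estimate (E2) providing an integrable time singularity $(t-s)^{-1-\upsilon_p+\lambda_p}$. Combining, one gets $\sup_{t\le T_0}\big|\mathcal T\psi_1(t)-\mathcal T\psi_2(t)\big|_\rho \le C(R,T_0)\sup_{t\le T_0}\big|\psi_1(t)-\psi_2(t)\big|_\rho$ on a ball of radius $R$, with $C(R,T_0)\to 0$ as $T_0\to 0$; so $\mathcal T$ is a contraction on a small time interval, giving a unique local solution, which is then extended to $[0,T]$ by iterating the argument since the a priori bound (below) prevents blow-up.

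To run the global extension I would first derive an a priori estimate: any solution $\psi$ of (\ref{E:deterministic}) satisfies $\sup_{t\le T}|\psi(t)|_\rho<\infty$, obtained by the same term-by-term bounds as above (now with growth rather than Lipschitz hypotheses) followed by a Gronwall-type argument — the nonlinear $g_i$-term of order $\nu$ is the delicate one and is handled exactly as in \cite[Lemma 4.3, Proposition 4.4]{GR} using the energy/moment inequalities there; since $\varphi$ is deterministic with $\int_0^T|\varphi|^2<\infty$, the control term only improves matters over the stochastic case. I expect the main obstacle to be the bookkeeping of exponents for the nonlinear term: making sure that for $\rho>\rho_0(\nu,d)$ one can choose the H\"older exponents so that (i) $(1+|\psi_1|^{\nu-1}+|\psi_2|^{\nu-1})$ is in the right $L^m$-space, (ii) $|\psi_1-\psi_2|$ is measured in $L^\rho$, and (iii) the resulting power of $|b(t-s,\cdot)|_p$ still has $-1-\upsilon_p+\lambda_p>-1$ so the $ds$-integral converges — this is the only place the dimension $d$ and polynomial order $\nu$ genuinely interact, and it is inherited directly from Gy\"ongy–Rovira. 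Since every estimate needed is either (E1)–(E4), (A3)–(A5), or a verbatim repetition of the corresponding step in the proof of Theorem \ref{existence} (with the stochastic integral replaced by an ordinary, and easier, Lebesgue integral in the control term), the full argument is omitted.
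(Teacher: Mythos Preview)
The paper gives no proof here at all --- it simply states that the argument is ``almost verbatim to that of Theorem~\ref{existence}, and thus omitted.'' Per Remark~2.2, that proof (Gy\"ongy--Rovira) proceeds by an approximation procedure: truncate $f,g_i,\sigma_j$ to globally Lipschitz functions, solve the truncated equations, obtain uniform bounds via the energy inequality \cite[Lemma~4.3, Proposition~4.4]{GR}, and pass to the limit through tightness and Skorokhod representation. Your proposal instead runs a direct local-in-time contraction on a ball in $\mathcal C([0,T_0];L^\rho(D))$ using the local Lipschitz hypothesis (A5), followed by an a~priori bound and continuation to $[0,T]$. Both routes rest on the same heat-kernel estimates (E1)--(E4), the same H\"older-exponent bookkeeping for the $g_i$-term that forces $\rho>\rho_0(\nu,d)$, and the same Cauchy--Schwarz treatment of the control factor $\varphi$, so your sketch is correct. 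Your approach is arguably cleaner in this purely deterministic setting (no probability, no Skorokhod, no truncation layer), whereas the paper's implied route has the advantage of being a literal specialization of the stochastic argument --- one drops the stochastic convolution, observes that the deterministic control term obeys the same $L^\rho$-bounds, and every step of \cite{GR} carries over unchanged. One small caveat: the references \cite[Lemma~4.3, Proposition~4.4]{GR} you invoke for the a~priori bound are stated for the \emph{truncated} stochastic equations; in your direct scheme you need the energy inequality for the full (untruncated) deterministic equation, which is a straightforward adaptation but not literally what is written there.
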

\vspace{-5 pt}
\section{Proof of Theorem \ref{LDP}}
\noindent The following proposition plays a key role in proving Theorem \ref{LDP}.  It leads to the verification of assumptions (S1) and (S2).  To this purpose, let $\gamma:[0,1)\to[0,1)$ be a measurable map such that $\gamma(r)\to\gamma(0)=0$ as $r\to0$.

 \begin{proposition}[Convergence of controlled process] \label{convergence}
 Let $M<\infty$, and suppose that $\xi^{\varepsilon}\rightarrow\xi$ and $\varphi^{\varepsilon}\rightarrow \varphi$ in distribution as $\varepsilon\rightarrow 0$ with $\{\varphi^{\varepsilon}\}_{\varepsilon>0} \subset {\mathcal A}_2^M$.  Then $v^{{\gamma
 (\varepsilon)}, \varphi^{\varepsilon}}_{\xi^{\varepsilon}} \rightarrow v^{0,\varphi}_{\xi}$ in distribution.
 \end{proposition}
 
\begin{proof} We carry out the proof in two steps.\\

\noindent {\bf Step 1: Tightness }\\

\noindent In this section, we show that  $v^{\gamma(\varepsilon), \varphi^\varepsilon}_{\xi^\varepsilon}$ is tight in $C\big([0,T]: L^\rho(D)\big)$ uniformly over  $\{\xi^\varepsilon\}_{\varepsilon>0} \subset L^\rho(D)$, $\{\varphi^\varepsilon\} _{\varepsilon>0}\subset {\mathcal A}^N_2$, and $\varepsilon\in(0,1]$.  Note that 
 
  \begin{align}
\label{E:controlled_pross}
\nonumber v^{{\gamma(\varepsilon)}, \varphi^\varepsilon}_{\xi^\varepsilon}(t,x)= &\int_D G_t(x,y)\xi^\varepsilon(y)dy\\& + \sqrt{\gamma({\varepsilon})} \int_0^t\int_D G_{t-s}(x,y) \sigma_j(s, v^{{\gamma(\varepsilon)}, \varphi^\varepsilon}_{\xi^\varepsilon}(s))(y)dydB^j(s)\nonumber \\& -\int_0^t\int_D\partial_{y_i}G_{t-s}(x,y)g_i(s, v^{{\gamma(\varepsilon)}, \varphi^\varepsilon}_{\xi^\varepsilon}(s))(y)dyds\nonumber\\&+\int_0^t\int_D G_{t-s}(x,y)f(s, v^{{\gamma(\varepsilon)}, \varphi^\varepsilon}_{\xi^\varepsilon}(s))(y)dyds\nonumber \\& + \gamma(\varepsilon)\int_0^t\int_D G_{t-s}(x,y) \sigma_j(s, v^{{\gamma(\varepsilon)}, \varphi^\varepsilon}_{\xi^\varepsilon}(s))(y){\varphi_j^\varepsilon}(s)dyds\nonumber \\&:= Z_1^{\varepsilon}+Z_2^{\varepsilon}+Z_3^{\varepsilon}+Z_4^{\varepsilon}+Z_5^{\epsilon}
\end{align}

\noindent We show tightness of  $Z_{\ell}^{\varepsilon}$  for $\ell = 1, 2, 3, 4, 5$ in $C\big([0,T]: L^{\rho}(D)\big)$, and therefore assert the claim.  Since $\{\xi^{\varepsilon}\}_{\varepsilon>0} \subset L^{\rho}(D)$, the tightness of $Z_1^{\varepsilon}$ follows by the following lemma.





\begin{lemma} [{\cite[Lemma A.2]{C}}] \label{continuity}  Let $\xi\in L^\rho(D)$.  Then $(t\rightarrow G_t\xi)$ belongs to $C\big([0,T]: L^\rho(D)\big)$, and 
\begin{align*}
\xi \rightarrow \{t\rightarrow  G_t\xi\},
\end{align*}

\noindent  is a continuous map in $\xi$.

\end{lemma}

 \noindent As for the tightness of $Z_2^{\varepsilon}$, we employ Lemma \ref{Gyongy-s}.  Note that by (A4) and a slight modification of Proposition 4.4 in \cite{GR}

\begin{align*}
\sup_{\{\xi^\varepsilon\} \subset L^\rho(D)} \sup_{\{\varphi^\varepsilon\} \subset {\mathcal A}^N_2}\sup_{{\varepsilon}\in(0,1]}&{\mathbb E}(\sup_{t\leqslant T}|\sigma(s,\cdot)|_\rho^\rho)\\& \leqslant  \sup_{\{\xi^\varepsilon\} \subset L^\rho(D)}\sup_{\{\varphi^\varepsilon\} \subset {\mathcal A}^N_2}\sup_{{\varepsilon}\in(0,1]}{\mathbb E}(\sup_{t\leqslant T}|  v^{{\gamma(\varepsilon)}, \varphi^\varepsilon}_{\xi^\varepsilon}(s, \cdot)      |_\rho^\rho )\\& <\infty ,
\end{align*}

\noindent for all $s\in [0,T].$\\

\noindent Therefore, the assumption of Lemma \ref{Gyongy-s} is satisfied, and so the tightness of $Z_2^{\varepsilon}$ in $C\big([0,T]: L^\rho(D)\big)$ is concluded.\\

\noindent  As for the tightness of $Z_3^{\varepsilon}$, we mainly use Lemma \ref{Gyongy}.  Note that $g_i(t,x,r) := g_{i1}(x,t,r)+g_{i2}(t,r)$.  Therefore
 
 \begin{align*}
Z_3^{\varepsilon} &= \int_0^t\int_D\partial_{y_i}yG_{t-s}(x,y)g_i(s, v^{{\gamma(\varepsilon)}, \varphi^\varepsilon}_{\xi^\varepsilon}(s))(y)dyds \\& = \int_0^t\int_D\partial_{y_i}G_{t-s}(x,y)g_{i1}(s, v^{{\gamma(\varepsilon)}, \varphi^\varepsilon}_{\xi^\varepsilon}(s))(y)dyds\\&+\int_0^t\int_D\partial_{y_i}G_{t-s}(x,y)g_{i2}(s, v^{{\gamma(\varepsilon)}, \varphi^\varepsilon}_{\xi^\varepsilon}(s))(y)dyds\\&:= Z_{3,1}^\varepsilon + Z_{3,2}^\varepsilon.
 \end{align*}
 Note that $g_{i1}$ satisfies the linear growth condition:

$$\sup_{t\in[0,T]}\sup_{x\in[0,1]}|g_{i1}(t,x,r)|\leqslant K(1+|r|).$$\\
\noindent In Lemma \ref{Gyongy}, let $\zeta^\varepsilon(t,y) := g_{i1}(t,y, v^{\gamma(\varepsilon), \varphi^\varepsilon}_{\xi^\varepsilon}(t,y)).$ We have

$$\sup_{t\in [0,T]}| g_{i1}(s, v^{\gamma(\varepsilon), \varphi^\varepsilon}_{\xi^\varepsilon}(t, \cdot))|_1\leqslant K +K\sup_{t\in[0,T]} |v^{\varepsilon, \varphi^\varepsilon}_{\xi^\varepsilon}(t, \cdot)|_\rho.$$

\noindent  Let $\vartheta^\varepsilon := K +K\sup_{t\in[0,T]} |v^{\gamma(\varepsilon), \varphi^\varepsilon}_{\xi^\varepsilon}(t, \cdot)|_\rho$.  We have

\begin{align*}
\lim_{C\rightarrow\infty} &\sup_{\{\xi^\varepsilon\}\subset L^\rho(D)}\sup_{ \{\varphi^{\varepsilon}\} \subset {\mathcal A}_2^N}\sup_{\varepsilon \in (0,1]} P( K +K\sup_{t\in[0,T]} |v^{\gamma(\varepsilon), \varphi^\varepsilon}_{\xi^\varepsilon}(t, \cdot)|_\rho\geqslant C)\\& \leqslant \lim_{C\rightarrow\infty}\sup_{\{\xi^\varepsilon\} \subset L^\rho(D)}\sup_{\{\varphi ^\varepsilon\} \subset {\mathcal A}^N_2}\sup_{\varepsilon \in (0,1]} P(\sup_{t\in[0,T]} |v^{\gamma(\varepsilon), \varphi^\varepsilon}_{\xi^\varepsilon}(t, \cdot)|_\rho\geqslant \frac{C}{2}) \\&
+ \lim_{C\rightarrow\infty}\sup_{\{\xi^\varepsilon\} \subset L^\rho(D)}\sup_{\{\varphi^\varepsilon\} \subset {\mathcal A}^N_2}\sup_{\varepsilon  \in (0,1]}P(K\geqslant\frac{C}{2}). 
\end{align*}

\noindent Clearly the first term on the RHS of the immediate above display is equal to zero.   As for the second term,  it suffices to show that  

$$
\sup_{t\in [0,T]} |v^{\gamma(\varepsilon), \varphi^{\varepsilon}}_{\xi^{\varepsilon}}(t,.)|_{\rho},
$$
 \noindent is bounded in probability uniformly over $\varepsilon \in (0,1]$,  $\{\xi^\varepsilon\}_{\varepsilon>0} \subset L^\rho(D)$ and $\{\varphi^\varepsilon\}_{\varepsilon>0} \subset {\mathcal A}^N_2 $  i.e.,
 
\begin{equation}\label{bounded_probability}
 \lim_{C\rightarrow \infty}\sup_{\{\xi^\varepsilon\} \subset L^\rho(D)}\sup_{\{\varphi^\varepsilon\} \subset {\mathcal A}^N_2}\sup_{{\varepsilon}\in(0,1]}P\big( \sup_{t \leqslant T} |v^{\gamma({\varepsilon}), \varphi^{\varepsilon}}_{\xi^{\varepsilon}}(t,.)|_{\rho} \geqslant C\big) = 0.
\end{equation}
 
\noindent The proof of (\ref{bounded_probability}) is a direct consequence of a slight modification of Proposition 4.4 in \cite{GR} and Chebychev's inequality; however, a different proof is presented for the interested reader. To that end, the analysis of \cite{LS}  and \cite{FS} is used, and details are given for the convenience of the reader: \noindent In \cite{G}, Gy\"ongy and Rovira (2000) prove the existence and uniqueness of solutions to (1.1), by an approximation procedure.  They let $f_n(t,x,r)$, and $ g_{in}(t,x,r)$ and $\sigma_{jn}$ be Borel functions for every integer $n$, such that they are globally Lipschitz in $r\in\mathbb R$, and $f_n := f$, $g_{in} :=  g_i$ and $\sigma_{jn}:=\sigma_j$ for $|r|\leqslant n$, $f_n = g_{in}=\sigma_{jn} := 0$ for $|r| \geqslant n+1$.  Moreover, $f_n$, and $g_{in} := g_{i1n}+g_{i2n}$ and $\sigma_{jn}$ satisfy the same growth conditions as $f$, $g_i$, and $\sigma_j$, respectively. By (\cite [Proposition 4.1] {G}), there exists a unique solution, say $y^{\varepsilon}_n$ to the semilinear equation (\ref{E:1}) with $f$ , $g_i$ and $\sigma_j$ replaced by $f_n$, $g_{in}$ and $\sigma_{jn}$.  That is,  $y^{\varepsilon}_n$, is the unique solution to the truncated equation.  Furthermore, $y^{\varepsilon}_n$ converges to $y^\varepsilon$ in $C\big([0,T]: L^\rho(D)\big)$ in {{probability}}, uniformly over $\varepsilon\in (0,1]$ and $n \in {\mathbb N}$, as $n$ approaches infinity.  Now recall the class of controlled equations

\begin{align}\label{approx_control}
\frac{\partial}{\partial t}v^{\gamma(\varepsilon), \varphi^{\varepsilon}}_{\xi^\varepsilon} (t, x)=&\bigg[\frac{\partial}{\partial x_i} \bigg(b_{ij}(x)\frac{\partial}{\partial x_j} v^{\gamma(\varepsilon), \varphi^{\varepsilon}}_{\xi^\varepsilon} (t,x) + g_i(t,x, v^{\gamma(\varepsilon), \varphi^{\varepsilon}}_{\xi^\varepsilon}(t, x) )\bigg)\nonumber\\&+f(t,x, v^{\gamma(\varepsilon), \varphi^{\varepsilon}}_{\xi^\varepsilon} (t,x))\bigg]+\varepsilon \sigma_j(t, v^{\gamma(\varepsilon), \varphi^{\varepsilon}}_{\xi^\varepsilon} (t,x)){\varphi_j^\varepsilon}(t)\nonumber\\&+\sqrt{\varepsilon}{\sigma}_j(t, x, v^{\gamma(\varepsilon), \varphi^{\varepsilon}}_{\xi^\varepsilon} (t,x))\frac{d}{dt}B^j(t) 
 \end{align}

\noindent for all $t\geqslant0$ and $x\in D$. By the same analogy as Proposition 4.1 in \cite{G}, a unique solution to the approximated version of (\ref{approx_control}), say $v^{\gamma(\varepsilon), \varphi^{\varepsilon}}_{\xi^{\varepsilon},n}$, exists. Furthermore, $v^{\varepsilon, \varphi^\varepsilon}_{\xi^\varepsilon,n}$ converges to $v^{\varepsilon, \varphi^\varepsilon}_{\xi}$ in $C\big([0,T]: L^\rho(D)\big)$ in {{probability}}, uniformly over $\varepsilon\in (0,1]$, $\{\xi^\varepsilon\}_{\varepsilon>0} \subset L^\rho(D)$, $\{\varphi^\varepsilon\}_{\varepsilon>0}\subset {\mathcal P}^N_2$, and $n\in\mathbb N $. A slight modification of Proposition 4.4 in \cite{G} shows that

\begin{equation}\label{bounded_probability_approximation}
 \lim_{C\rightarrow \infty} \sup_{\varepsilon \in(0,1]} \sup_{\varphi^\varepsilon\subset {\mathcal P}^N_2} \sup_{x\subset L^\rho(D)} \sup_{n\in \mathbb N} P\big( \sup_{t\leqslant T} |v^{\gamma({\varepsilon}), \varphi^{\varepsilon}}_{{\xi^{\varepsilon},n}}(t,.)|_{\rho} \geqslant C\big) = 0.
\end{equation}
 
\noindent Observe that

\begin{align*}\label{ultimate}
\sup_{\{\xi^\varepsilon\} \subset L^\rho(D)}&\sup_{\{\varphi^\varepsilon\} \subset {\mathcal A}^N_2}\sup_{{\varepsilon}\in(0,1]} P \big(\sup_{t\leqslant T}|v^{\gamma(\varepsilon), \varphi^{\varepsilon}}_{\xi^{\varepsilon}}|_{\rho}\geqslant C\big)\leqslant \sup_{\{\xi^\varepsilon\} \subset L^\rho(D)}\sup_{\{\varphi^\varepsilon\} \subset {\mathcal A}^N_2}\\&\sup_{{\varepsilon}\in(0,1]} \sup_{n\in \mathbb N}P\bigg(\sup_{t\leqslant T}|v^{\gamma(\varepsilon), \varphi^{\varepsilon}}_{\xi^{\varepsilon}} -v^{\gamma({\varepsilon}), \varphi^{\varepsilon}}_{{\xi^{\varepsilon},n}}|_{\rho}\nonumber +\sup_{t\leq T}|v^{\gamma({\varepsilon}), \varphi^{\varepsilon}}_{{\xi^{\varepsilon},n}}   |_{\rho} \geqslant C\bigg)\\&\leqslant \sup_{\{\xi^\varepsilon\} \subset L^\rho(D)}\sup_{\{\varphi^\varepsilon\} \subset {\mathcal A}^N_2}\sup_{{\varepsilon}\in(0,1]} \sup_{n \in \mathbb N}P\big(\sup_{t\leqslant T}|v^{\gamma(\varepsilon), \varphi^{\varepsilon}}_{\xi^{\varepsilon}} -v^{\gamma({\varepsilon}), \varphi^{\varepsilon}}_{{\xi^{\varepsilon},n}}|_{\rho} \geqslant \frac{C}{2}\big)\\&+\sup_{\{\xi^\varepsilon\} \subset L^\rho(D)}\sup_{\{\varphi^\varepsilon\}  \subset {\mathcal A}^N_2}\sup_{{\varepsilon}\in(0,1]} \sup _{n \in \mathbb N} P\big(\sup_{t\leqslant T}|v^{\gamma({\varepsilon}), \varphi^{\varepsilon}}_{{\xi^{\varepsilon},n}}   |_{\rho}\geqslant \frac{C}{2}\big).
\end{align*}

\noindent By letting $C$ approach infinity, and exploiting the boundedness in probability of  $|v^{\gamma({\varepsilon}), \varphi^{\varepsilon}}_{\xi^{\varepsilon},n}|_{\rho}$, we get

\begin{align*}
&\lim_{C\rightarrow\infty}\sup_{\{\xi^\varepsilon\} \subset L^\rho(D)}\sup_{\{\varphi^\varepsilon\} \subset {\mathcal A}^N_2}\sup_{{\varepsilon}\in(0,1]} P \big(\sup_{t\leqslant T}|v^{\gamma(\varepsilon), \varphi^{\varepsilon}}_{\xi^{\varepsilon}}|_{\rho} \geqslant C\big)
 \\&\leqslant \lim_{C\rightarrow \infty} \sup_{\{\xi^\varepsilon\} \subset L^\rho(D)}\sup_{\{\varphi^\varepsilon\} \subset {\mathcal A}^N_2}\sup_{{\varepsilon}\in(0,1]} \sup_{n\in \mathbb N}P\big(\sup_{t\leqslant T}|v^{\gamma(\varepsilon), \varphi^{\varepsilon}}_{\xi^{\varepsilon}} -v^{\gamma({\varepsilon}), \varphi^{\varepsilon}}_{{\xi^{\varepsilon},n}}|_{\rho} \geqslant \frac{C}{2}\big). 
\end{align*}

\noindent Now by letting $n$ tend to infinity, due the convergence in probability of $v^{\gamma(\varepsilon), \varphi^{\varepsilon}}_{\xi^{\varepsilon},n}$  to $v^{\gamma(\varepsilon), \varphi^{\varepsilon}}_{\xi^{\varepsilon}}$, we conclude that

\begin{equation}\label{tightness}
 \lim_{C\rightarrow \infty} \sup_{\{\xi^\varepsilon\} \subset L^\rho(D)}\sup_{\{\varphi^\varepsilon\} \subset {\mathcal A}^N_2}\sup_{{\varepsilon}\in(0,1]}P\big( \sup_{t\leqslant T} |v^{\gamma({\varepsilon}), \varphi^{\varepsilon}}_{\xi^{\varepsilon}}(t,.)|_{\rho} \geqslant C\big) = 0.
\end{equation}
\noindent Therefore

$$\lim_{C\rightarrow\infty} \sup_{\{\xi^\varepsilon\} \subset L^\rho(D)}\sup_{\{\varphi^\varepsilon\} \subset {\mathcal A}^N_2}\sup_{{\varepsilon}\in(0,1]}P(\vartheta ^{\varepsilon}\geqslant C) = 0,$$

\noindent and the assumption of Lemma  \ref{Gyongy} is satisfied.  This establishes the tightness of $Z_3^{\epsilon}$.  The proof of tightness for $Z_4^{\epsilon}$ follows by the same analogy as $Z_3^{\epsilon}$, and thus omitted.   \\

\noindent  As for the tightness of $Z_5^{\varepsilon}$, we show that the H\"older norm is uniformly bounded in probability over $\varepsilon \in (0,1]$, $\{\xi^\varepsilon\}_{\varepsilon>0}\subset L^\rho(D)$ and
 $\{\varphi^\varepsilon\}_{\varepsilon>0} \subset {\mathcal A}^N_2.$   By Lemma \ref{lem:GR-3.1}(iv),  with $\gamma=2$, $q\geqslant 1$, $\displaystyle p= \frac{q}{q-1}$, $\displaystyle\kappa_p=1 - \frac{d}{2q}$ and any $\displaystyle \beta\in(0,\kappa_p-1/2)$, for $\xi, \xi+\zeta \in D$, and $t \in [0,T]$,
\begin{align*}
    &|Z^{\varepsilon}_5(t,x) - Z^{\varepsilon}_5(t,x+\zeta)| \leq C|\zeta|^\beta \left( \int_0^T \big|\sigma(s, v^{{\gamma(\varepsilon)}, \varphi^\varepsilon}_{\xi^\varepsilon}(s)(\cdot)\big|_{q}^2|\varphi^\varepsilon(s)|^2ds\right)^{\frac{1}{2}}\\
    &\leq C|\zeta|^\beta \sup_{s \in [0,T]} | v^{{\gamma(\varepsilon)}, \varphi^\varepsilon}_{\xi^\varepsilon}(s, \cdot)|_{q},
  \end{align*}
 \noindent where the boundedness of controls in $L^2([0, T] : {\mathbb R}^k)$ has been used.  Similarly, the H\"older continuity in time follows from Lemma \ref{lem:GR-3.1}(ii). For $\alpha \in (0, {\kappa}_p -1/2)$
  \begin{align*}
    &|Z^{\varepsilon}_5(t) - Z^{\varepsilon}_5(s)|_{L^\infty}\\
    & \leq C|t-s|^\alpha \left( \int_0^T \big|\sigma(s, v^{{\gamma(\varepsilon)}, \varphi^\varepsilon}_{\xi^\varepsilon}(s))(\cdot)\big|_{q}^2|\varphi^\varepsilon(s)|^2ds\right)^{\frac{1}{2}}\\
    &\leq C|t-s|^\alpha \sup_{s \in [0,T]} | v^{{\gamma(\varepsilon)}, \varphi^\varepsilon}_{\xi^\varepsilon}(s, \cdot)|_{q}.
     \end{align*}
 \noindent By choosing $q$ arbitrarily large, we can see that $Z^{\varepsilon}_3$ \textcolor{black}{is H\"older continuous in time and space and that the H\"older norm is uniformly bounded in probability over $\varepsilon \in (0,1]$, $\{\varphi^\varepsilon\}_{\varepsilon>0} \subset \mathcal{A}_2^N$, and $\{\xi^\varepsilon\}_{\varepsilon>0} \subset L^\rho(D)$. Therefore $Z^\varepsilon_5$ is uniformly tight in $C([0,T] : L^\rho(D))$.}\\

\noindent {\bf Step 2: Convergence}\\

Having the tightness of  $Z_{\ell}^{\varepsilon}$ for $\ell = 1, 2, 3, 4, 5$ at hand, by Prohorov's theorem, we can extract a subsequence along which each of the aforementioned processes converge in distribution to $Z_{\ell}^0$.  We aim to show that the respective limits are as follows:

\noindent \begin{align}
&\nonumber Z_1^0  := \int_0^1G_t(s,y)\xi(y) dy,\\&\nonumber Z_2^0 = 0,
\\&\nonumber
Z_3^0 := -\int_0^t\int_0^1 \partial_{y_i}G_{t-s}(x,y) g_i(s,\tilde v(s))(y)  dy ds,
\\&\nonumber 
Z_4^0 := \int_0^t\int_0^1 G_{t-s}(x,y) f(s,\tilde v(s))(y)  dy ds,
\\&\nonumber 
Z_5^0 := \int_0^t\int_0^1 G_{t-s}(x,y) \sigma_j(s,\tilde v)(y) \varphi_j(s) dy ds.
\end{align}

\noindent where  $\tilde v(t,x)$ is the limit in distribution of $v^{\gamma({\varepsilon}), \varphi^{\varepsilon}}_{\xi^{\varepsilon}}$ in $C\big([0,T]: L^\rho(D)\big)$.

\noindent The case $\ell=1$ follows from lemma (\ref{continuity}).  For $\ell=2$, let   

$${\mathcal J}^\varepsilon := \int_0^t\int_D G_{t-s}(x,y) \sigma_j(s, v^{{\gamma(\varepsilon)}, \varphi^\varepsilon}_{\xi^\varepsilon}(s))(y)dB^j(s).$$ 

\noindent Note that ${\mathcal J }^\varepsilon$ is tight by Lemma \ref{Gyongy-s}.  As a result, convergence of $Z_2^\varepsilon$ to zero as $\varepsilon\to 0$ follows readily.

As for $\ell=3$, we invoke the Skorokhod  Representation Theorem \cite{DK} and assume almost sure convergence on a larger, common probability space for the purpose of identification of limits.  Denote the RHS of $Z_3^0$ by ${\bar Z}_3^0$. We have

\begin{align}\label{g_convergence}
|Z_3^{\epsilon} - {\bar Z}_3^0| \nonumber & \leqslant \int_0^t\int_D |\partial_y G_{t-s}|\big(|1+|v^{\gamma(\varepsilon), \varphi^\varepsilon}_{{\xi}^\varepsilon}|^{\nu-1}+|\tilde v|^{\nu-1}\big) |v^{\gamma(\varepsilon), \varphi^\varepsilon}_{{\xi}^\varepsilon}-\tilde v|dy ds \\&\nonumber\leqslant \bigg(\sup_{x,t}|v^{\gamma(\varepsilon), \varphi^\varepsilon}_{{\xi}^\varepsilon}-\tilde v| \bigg) \Bigg[\sqrt{T} \bigg(\sup_t  |v^{\gamma(\varepsilon), \varphi^\varepsilon}_{{\xi}^\varepsilon}|_\rho^{\nu-1} +\sup_t|\tilde v|_\rho^{\nu-1}\bigg) \\& \times \bigg(\int_0^t\int_D |\partial_y G_{t-s}|^{\frac{\rho}{\rho-\nu+1}} dy ds \bigg)^{1-\nu/\rho +1/\rho}+\int_0^t\int_D |\partial_y G_{t-s}|dy ds\Bigg]  ,
\end{align}

\noindent where the Lipschitz property of $g_i$  with linearly growing constant (A5), and H\"older's inequality have been used.  Note that by estimate (E4) the integrals on the RHS of (\ref{g_convergence}) are finite.  Therefore, since $v^{\gamma(\varepsilon), \varphi^\varepsilon}_{{\xi}^\varepsilon}$ converges to $\tilde v$ as $\varepsilon \rightarrow 0$, the RHS of (\ref{g_convergence}) also converges to zero.  By the uniqueness of limits $Z_3^0 = {\bar Z}_3^0$.

 For $\ell=4$, we invoke the Skorokhod  Representation Theorem \cite{DK} again. Denote the RHS of  $Z_4^0$ by ${\bar Z}_4^0$.
 
\begin{align}\label{f_convergence}
 |Z_4^{\epsilon} - {\bar Z}_4^0| \nonumber& \leqslant \int_0^t\int_D |G_{t-s}| |v^{{\gamma(\varepsilon), \varphi^\varepsilon}}_{\xi^\varepsilon}-\tilde v| dy ds \\& \leqslant \sup_{x,t} |v^{{\gamma(\varepsilon), \varphi^\varepsilon}}_{\xi^\varepsilon}- \tilde v|  \int_0^t\int_D |G_{t-s}(x,y)| dy ds.
\end{align}

 \noindent Note that the RHS of (\ref{f_convergence}) converges to zero as $\varepsilon \rightarrow 0$ since $v^{\gamma(\varepsilon), \varphi^\varepsilon}_{{\xi}^\varepsilon}\to \tilde v$, and 

$$
\int_0^t\int_D |G_{t-s}(x,y)|dyds \leqslant C,
$$

 \noindent by estimate (E4).  Again by the uniqueness of limits $Z_4^0 = {\bar Z}_4^0$.  For $\ell=5$, we invoke the Skorokhod  Representation Theorem \cite{DK} again.  We have
\begin{align}\label{Eq:control_convergence}
|Z_5^{\epsilon} - \bar Z_5^0| &\nonumber\leqslant \int_0^t\int_D |G_{t-s}| |\sigma_j(s,v^{\gamma({\varepsilon}), \varphi^\varepsilon}_{{\xi}^\varepsilon}(s))(y)-\sigma_j(s,\tilde v (s))(y)| |\varphi^\varepsilon_j(s)|dyds\\& +\int_0^t\int_D |G_{t-s}| \sigma_j(s, \tilde v(s))(y) |\varphi^\varepsilon_j(s) - \varphi_j(s)|dsdy.
\end{align} 
\noindent The first term on the RHS of (\ref{Eq:control_convergence}) can be bounded above by

\begin{align} \label{cauchy-schwartz}
&\nonumber M\bigg( \int_0^t\int_0^1 |G_{t-s}|^2 |\sigma_j(s,v^{\gamma({\varepsilon}), \varphi^\varepsilon}_{\xi^\varepsilon}(s))(y)-\sigma_j(s,\tilde v(s))(y)|^2 dy ds \bigg)^{1/2}\\& \leqslant C (\sup_{x,t}| v^{\gamma({\varepsilon}), \phi^\varepsilon}_{{\xi}^\varepsilon} -\tilde v|),
\end{align} 
\noindent where the Cauchy-Schwartz inequality, properties of controls, estimate (E4), and (A5) have been used.  The first term on the RHS of (\ref{Eq:control_convergence}) thus converges to zero, since $v^{\gamma({\epsilon}), \varphi^\varepsilon}_{{\xi}^\varepsilon}\to \tilde v$ as $\varepsilon \to 0$ .  The second term on the RHS of (\ref{Eq:control_convergence}) also converges to zero as $\varepsilon \rightarrow 0$, since ${\varphi^{{\varepsilon}}_j\to \varphi_j}$, and 

\begin{align}
& \nonumber \int_0^t\int_D \big(G_{t-s} (x,y) \sigma_j(s,\tilde v(s))(y)\big)^2 dy ds  \\&\nonumber \leqslant \bigg(\int_0^t\int_D \big(G_{t-s}(x,y)\big)^{\frac{2\rho}{\rho-2}} dyds\bigg)^{1-2/\rho}  \\& \times\bigg(\int_0^t\int_D \big(\sigma_j(s,\tilde v(s))(y)\big)^\rho dy ds\bigg)^{2/\rho} \leqslant C,
\end{align}

\noindent   where (A4), and estimate (E4) have been used.  By the uniqueness of limits $Z_5^0 = {\bar Z}_5^0$.  This shows that along a subsequence, the controlled process converges to $\tilde v$ where $\tilde v$ satisfies the skeleton equation.  By unique solvability of the skeleton,  $\tilde v = v^{0,\varphi}_{\xi}(s)$.  This shows that the family of controlled processes, $v^{{\gamma(\varepsilon)}, \varphi^\varepsilon}_{\xi^\varepsilon}$, converges to the skeleton,   $v^{0,\varphi}_{\xi}(s)$.  The proof of Proposition 5.1 is thus complete.
 \end{proof}
 
 \subsection{Verification of Assumption (S1)}  Assumption (S1) follows by Theorem \ref{uniqueness}, and applying Theorem \ref{convergence} with $\gamma=0$.  
  
 \subsection{Verification of Assumption (S2)}  Assumption (S2) follows by applying Theorem  \ref{convergence}  with
$\gamma(r)=r$, $r\in[0,1)$.\\

\noindent {\it Proof of Theorem \ref{LDP}}.  Theorem \ref{LDP} follows immediately from verification of assumptions (S1) and (S2).\\

\noindent{\bf{Acknowledgement.}}  
The author would like to thank Professor Hui Wang of Brown University for fruitful discussions on the manuscript.

\bibliographystyle{amsplain}

\end{document}